\newcommand{\CM}{Cohen-Macaulay}
\newcommand{\wrt}{with respect to}
\newcommand{\n}{\mathfrak{n} }
\newcommand{\m}{\mathfrak{m} }
\newcommand{\q}{\mathfrak{q} }
\newcommand{\K}{\mathbf{K} }
\newcommand{\Db}{\mathbf{D} }
\newcommand{\Cb}{\mathbf{C} }
\newcommand{\Pb}{\mathbf{P} }
\newcommand{\Qb}{\mathbf{Q} }
\newcommand{\rt}{\rightarrow}
\newcommand{\xar}{\longrightarrow}
\newcommand{\ov}{\overline}
\newcommand{\bx}{\mathbf{x}}
\newcommand{\by}{\mathbf{y}}
\newcommand{\bff}{\mathbf{f} }
\newcommand{\wt}{\widetilde }
\newcommand{\image}{\operatorname{image}}
\newcommand{\codim}{\operatorname{codim}}
\newcommand{\Tr}{\operatorname{Tr}}
\newcommand{\Om}{\Omega }
\newcommand{\ann}{\operatorname{ann}}
\newcommand{\cx}{\operatorname{cx}}
\newcommand{\cone}{\operatorname{cone}}
\newcommand{\rad}{\operatorname{rad}}
\newcommand{\End}{\operatorname{End}}
\newcommand{\Endu}{\operatorname{\underline{End}}}
\newcommand{\projdim}{\operatorname{projdim}}
\newcommand{\injdim}{\operatorname{injdim}}
\newcommand{\Hom}{\operatorname{Hom}}
\newcommand{\Ext}{\operatorname{Ext}}
\newcommand{\Tor}{\operatorname{Tor}}
\newcommand{\CMg}{\operatorname{CM^g}}
\newcommand{\CMr}{\operatorname{CM^r}}
\newcommand{\CMS}{\operatorname{\underline{CM}}}
\theoremstyle{plain}
\newtheorem{theorem}{Theorem}[section]
\newtheorem{corollary}[theorem]{Corollary}
\newtheorem{lemma}[theorem]{Lemma}
\newtheorem{proposition}[theorem]{Proposition}
\theoremstyle{definition}
\newtheorem{definition}[theorem]{Definition}
\newtheorem{remark}[theorem]{Remark}
\theoremstyle{remark}
\begin{document}

\title{Invariants of  Linkage of modules}
 \author{Tony J. Puthenpurakal}
\date{\today}
\address{Department of Mathematics, Indian Institute of Technology Bombay, Powai, Mumbai 400 076, India}
\email{tputhen@math.iitb.ac.in}
\subjclass{Primary 13C40; Secondary 13D07}
\keywords{liason of modules, Gorenstien ideals, vanishing of Ext, Tor}
\begin{abstract}
Let $(A,\m)$ be a Gorenstein local ring and let $M, N$ be two \CM \ $A$-modules 
with $M$ linked to $N$ via a Gorenstein ideal $\q$.
  Let $L$ be another finitely generated $A$-module. We show that $\Ext^i_A(L,M) = 0 $ for all $i \gg 0$ if and only if $\Tor^A_i(L,N) = 0$ for all $i \gg 0$. If $D$ is \CM \ then we show that $\Ext^i_A(M, D) = 0 $ for all $i \gg 0$ if and only if 
  $\Ext^i_A(D^\dagger, N) = 0$ for all $i \gg 0$,  where $D^\dagger = \Ext^r_A(D,A)$ and $r = \codim D$.
  As a consequence we get that $\Ext^i_A(M, M) = 0 $ for all $i \gg 0$ if and only if 
  $\Ext^i_A(N, N) = 0$ for all $i \gg 0$.
   We also show that $\End_A(M)/\rad \End_A(M) \cong (\End_A(N)/\rad \End_A(N))^{op}$. We also give a negative answer to a question of  Martsinkovsky and Strooker.

\end{abstract}

\maketitle

\section{introduction}
Let $(A,\m)$ be a Gorenstein local ring. Recall an ideal $\q$ in $A$ is said to be a \textit{Gorenstein ideal} if $\q$ is perfect and $A/\q$ is a Gorenstein local ring. A special class of Gorenstein ideals are \textit{CI(complete intersection) ideals}, i.e., ideals generated by an $A$-regular sequence. In this paper, a not necessarily perfect ideal $\q$ such that $A/\q$ is a Gorenstein ring will be called a \textit{quasi-Gorenstein }ideal. 

Two ideals $I$ and $J$ are linked by a Gorenstein ideal $\q$ if $\q \subseteq I \cap J$;  $ J = (\q \colon I)$ and $I = (\q \colon J)$. We write it as $I \sim_\q J$. If $\q$ is a complete intersection then we say $I$ is CI-linked to $J$ via $\q$. If $\q$ is a quasi-Gorenstein ideal then we say $I$ is quasi-linked to $J$ via $\q$.
Note that traditionally only CI-linkage used to be considered. However in recent times more general types of linkage are studied.

We say ideals $I$ and $J$ is in the \emph{same linkage class} if there is a sequence of ideals $I_0,\ldots, I_n$ in $A$ and Gorenstein ideals $\q_0 \ldots,\q_{n-1}$ such that
\begin{enumerate}[\rm (i)]
\item
$I_j \sim_{\q_j} I_{j+1}$, for $j = 0,\ldots, n-1$.
\item
 $I_0 = I$ and $I_n =J$. 
\end{enumerate}
If $n$ is even then we say that $I$ and $J$ are \emph{evenly linked}.
We can analogously define CI-linkage class, quasi-linkage class, even CI-linkage class and even quasi-linkage class (of ideals).

A natural question is that if $I$ and $J$ are in the same linkage class then what properties of $I$ is shared by $J$. This was classically done when $I$ and $J$ are in the same CI-linkage class (or even CI-linkage class). In their landmark paper \cite{PS}, Peskine and Szpiro proved that if $I$ and $J$ are in the same CI-linkage class and  $I$ is a Cohen-Macaulay ideal (i.e., $A/I$ is a Cohen-Macaulay  ring) then so is $J$. This can be proved more generally for ideals in a quasi-linkage class, see \cite[Corollary 15, p.\ 616]{MS}. In another landmark paper \cite[1.14]{H}, Huneke proved that if $I$ is in the CI-linkage class of a complete intersection then the Koszul homology $H_i(I)$ are \CM \ for all $i \geq 0$. It is known that this result is not
true in if $I$ is linked to a complete intersection ( via Gorenstein ideals and not-necessarily CI-ideals). If $A = k[[X_1,\ldots, X_n]]$ (where $k$ is a field or a complete discrete valuation ring) then Huneke defined some invariants of even CI-linkage class of equidimensional unmixed ideals, see \cite[3.2]{H2}. Again these invariants are not stable under Gorenstein (even)-liason.

In a remarkable paper Martsinkovsky and Strooker, \cite{MS},  introduced liason for modules. See section two for definition. We note here that ideals $I$ and $J$ are linked as ideals if and only if $A/I$ is linked to $A/J$ as modules. One can analogously define linkage class of modules, even linkage of modules etc. We can also define CI linkage of modules, quasi-linkage of modules etc. 

Thus a natural question arises: If $M, N$ are in the same linkage class of modules (or same even linkage class of modules) then what properties of $M$ are shared by $N$. The generalization of Peskine and Szpiro's result holds. If $M$ is \CM \ and $N$ is quasi-linked to $M$ then $N$ is also \CM, see \cite[Corollary 15, p.\ 616]{MS}.  
To state another property which is preserved under linkage first 
let us  recall the definition of Cohen-Macaulay approximation from \cite{AB}. A  \CM \ approximation of a
  finitely generated $A$-module $M$ is a exact sequence
\[
0 \xar Y \xar X \xar M \xar 0
\]
where $X$ is a maximal \CM \ $A$-module and $Y$ has finite projective dimension.
Such a sequence is not unique but $X$ is known to unique up to a free summand and so is well defined in the stable category   $\CMS(A)$ of maximal \CM \ $A$-modules. We denote by $X(M)$ the maximal \CM \ approximation of $M$.
In \cite[Theorem 13, p.\ 620]{MS}, Martsinkovsky and Strooker proved that if $M$ is evenly  linked to $N$ then  $X(M) \cong X(N)$ in $\CMS(A)$. They also asked if 
this result holds for $M$ and $N$ are in the same even quasi-linkage class, see
 \cite[Question 3, p.\ 623]{MS}. A motivation for this paper was to try and solve this question.
We answer this question in the negative. We prove
\begin{theorem}
\label{second}
There exists a complete intersection  $A$ of dimension one and finite length modules $M, N$ such that $M$ is evenly quasi-linked to $N$ but $X(M) \ncong X(N)$.
\end{theorem}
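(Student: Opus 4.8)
The plan is to give an explicit example over the one–dimensional hypersurface $A=k[[x,y]]/(y^{2})$, where $k$ is a field; this is a complete intersection of dimension one. I would rely on the classification of maximal \CM\ $A$-modules by matrix factorizations of $y^{2}$: up to isomorphism the non-free indecomposables are $A/(y)$ and
\[
N_{d}:=\operatorname{coker}\!\left(\begin{pmatrix}y & x^{d}\\ 0 & y\end{pmatrix}\colon A^{2}\to A^{2}\right),\qquad d\ge 1,
\]
and the $N_{d}$ are pairwise non-isomorphic (e.g.\ $\operatorname{Fitt}_{1}(N_{d})=(x^{d},y)$, and $A/(x^{d},y)$ has length $d$). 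Since $A$ is a hypersurface, each maximal \CM\ $A$-module is fixed by the syzygy functor in $\CMS(A)$ ($\Om N_{d}\cong N_{d}$ and $\Om(A/(y))\cong A/(y)$), so the formula $X(L)\cong\Om^{-1}_{A}\Om^{1}_{A}(L)$ of \cite{AB} simplifies to $X(L)\cong\Om^{1}_{A}(L)$ in $\CMS(A)$ for every $A$-module $L$. The first step is then to record that for the ``thin'' finite length module $L_{d}:=A/(x^{d},y)$ one has $\Om^{1}_{A}(L_{d})=(x^{d},y)$, and a syzygy computation (using $y^{2}=0$) identifies $(x^{d},y)\cong N_{d}$; hence $X(L_{d})\cong N_{d}$, and in particular $X(k)=X(L_{1})\cong N_{1}$.

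Next I would produce the quasi-linkage. For $m\ge 2$ put $\q_{m}:=(x^{m},y)$. Then $\bar A_{m}:=A/\q_{m}\cong k[x]/(x^{m})$ is Artinian Gorenstein, so $\q_{m}$ is a quasi-Gorenstein ideal; but $\q_{m}$ is \emph{not} perfect, because the minimal free resolution of $A/\q_{m}$ over $A$ becomes periodic of period two (with matrices $\begin{pmatrix}y & 0\\ \pm x^{m} & y\end{pmatrix}$) and never terminates. The indecomposable non-free $\bar A_{m}$-modules are the $\bar A_{m}/(x^{d})$ with $1\le d\le m-1$, and since $\operatorname{Hom}_{\bar A_{m}}(\bar A_{m}/(x^{d}),\bar A_{m})=(x^{m-d})$, the Martsinkovsky--Strooker linkage operator acts as the reflection $\lambda_{\q_{m}}\big(\bar A_{m}/(x^{d})\big)=\bar A_{m}/(x^{m-d})$. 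As $A$ acts on all these modules through $y\mapsto 0$, this says precisely that $L_{d}$ is quasi-linked to $L_{m-d}$ via $\q_{m}$; applying $\lambda_{\q_{m}}$ twice returns $L_{d}$, so this is a genuine linkage.

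Finally I would chain two links: $(m,d)=(3,1)$ links $L_{1}=k$ to $L_{2}=A/(x^{2},y)$ via $\q_{3}$, and $(m,d)=(5,2)$ links $L_{2}$ to $L_{3}=A/(x^{3},y)$ via $\q_{5}$. Thus $M:=k$ is evenly quasi-linked to $N:=A/(x^{3},y)$. By the first paragraph $X(M)\cong N_{1}$ and $X(N)\cong N_{3}$, and $N_{1}\ncong N_{3}$, so $X(M)\ncong X(N)$ in $\CMS(A)$, which establishes Theorem~\ref{second}.

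I expect the delicate points to be: pinning down the maximal \CM\ approximations $X(L_{d})$ correctly — the obvious surjection onto $L_{d}$ has a kernel of infinite projective dimension, so one must genuinely invoke the formula above (equivalently, dualize the defining short exact sequence of the approximation and use the self-duality of $L_{d}$ and of $N_{d}$); verifying that each $\q_{m}$ is quasi-Gorenstein but not perfect; and computing the Martsinkovsky--Strooker operator on the modules over the Artinian rings $\bar A_{m}$, which is exactly where the period-two resolution of $\bar A_{m}$ over $A$ enters and obstructs $X$ from being a quasi-linkage invariant, in contrast with the CI-linkage case of \cite{MS}.
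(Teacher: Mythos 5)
Your proof is correct, and it takes a genuinely different route from the paper's. The paper works over a codimension-two complete intersection $A=Q/(u,f^a)$ with $Q=\QQ[x,y,z]_{(x,y,z)}$, produces a quasi-Gorenstein ideal $\ov{\q}$ of complexity~$2$ together with a finite-length module $M$ of complexity~$1$, and then distinguishes the two even links $N$ and $L$ of $M$ by showing $\cx N=2\neq 1=\cx L$; the punch line is that $\cx X(P)=\cx P$, so complexity is a linkage-insensitive obstruction. That argument cannot run over a hypersurface, since there all complexities are $\le 1$; you sidestep this by working over the hypersurface $A=k[[x,y]]/(y^2)$, exploiting the explicit classification of its MCM modules by matrix factorizations of $y^2$. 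You then identify $X(L_d)\cong N_d$ directly (which can also be seen from the exact sequence $0\rt A\rt N_d\rt L_d\rt 0$), observe $\Om N_d\cong N_d$, and separate $N_1$ from $N_3$ by the Fitting ideal $\operatorname{Fitt}_1(N_d)=(x^d,y)$. The two-step linkage through $\q_3=(x^3,y)$ and $\q_5=(x^5,y)$ is verified cleanly because over $\ov{A}_m=k[x]/(x^m)$ the Martsinkovsky--Strooker operator is literally the reflection $d\mapsto m-d$ (and $\Ext^1_{\ov{A}_m}(\Tr(\ov{A}_m/(x^d)),\ov{A}_m)=0$, so these really are horizontal links). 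What your argument buys is an extremely explicit, self-contained counterexample over the simplest possible ring; what the paper's buys is the structural insight that complexity is not an even quasi-linkage invariant, which is information your example cannot exhibit since complexity is trivially $\le 1$ there. Both establish Theorem~\ref{second}.
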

 To prove our next result we introduce a construction (essentially due to Ferrand) which is very useful, see section three.  Let $\CMg(A)$ be the full subcategory of \CM \ $A$-modules of codimension $g$. We prove:
\begin{theorem}
\label{third}
Let $(A,\m)$ be a Gorenstein local ring of dimension $d$. Let $M\in \CMg(A)$.
Assume $M \sim_\q N$ where
  $\q$ is  a Gorenstien ideal in $A$.
  Let $L$ be 
a finitely generated $A$-module and let $D \in \CMr(A)$. Set $D^\dagger = \Ext^r_A(D, A)$. Then
\begin{enumerate}[\rm (1)]
\item
$\Ext^i_A(L, M)  = 0$ for all $i \gg 0$ if and only if $\Tor^A_i(L, N) = 0$ for all $i \gg 0$.
\item
$\Ext^i_A(M, D)  = 0$ for all $i \gg 0$ if and only if $\Ext^i_A(D^\dagger, N)  = 0$ for all $i \gg 0$.
\end{enumerate}
\end{theorem}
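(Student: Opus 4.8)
The plan is to deduce both parts from biduality over the Gorenstein ring $A$, after using linkage only to replace $N$ by $M^\dagger$ (and $M$ by $N^\dagger$) up to modules of finite projective dimension. Throughout write $\bar A=A/\q$, a Gorenstein ring of dimension $d-g$; recall that a Gorenstein ideal $\q$ of grade $g$ is perfect, so $\projdim_A\bar A=g$, and that $M,N$ are maximal \CM\ $\bar A$-modules with $M^\dagger=\Ext^g_A(M,A)=\Hom_{\bar A}(M,\bar A)$.

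First I would record the exact sequences produced by linkage. Since $N=\Om_{\bar A}\Tr_{\bar A}M$ and $M$ is maximal \CM\ over $\bar A$, dualizing a minimal $\bar A$-resolution of $M$ gives an exact sequence of $A$-modules $0\rt M^\dagger\rt G\rt N'\rt0$ with $G$ free over $\bar A$ and $N'$ stably isomorphic to $N$; absorbing free summands (the content of the Ferrand-type construction of Section~3) yields
\[ 0\rt M^\dagger\oplus P'\rt P\rt N\rt0,\qquad 0\rt N^\dagger\oplus Q'\rt Q\rt M\rt0 \]
with $P,P',Q,Q'$ free $\bar A$-modules, hence of projective dimension $\le g$ over $A$. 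Applying $\Tor^A_\bullet(L,-)$ to the first and $\Ext^\bullet_A(-,D)$ to the second kills $P,P',Q,Q'$ in degrees $>g$, so for all $i\gg0$
\[ \Tor^A_i(L,N)\cong\Tor^A_{i-1}(L,M^\dagger),\qquad \Ext^i_A(M,D)\cong\Ext^{i-1}_A(N^\dagger,D). \]
Thus (1) reduces to ``$\Ext^i_A(L,M)=0$ for $i\gg0$ $\iff$ $\Tor^A_i(L,M^\dagger)=0$ for $i\gg0$'' and (2) to ``$\Ext^i_A(N^\dagger,D)=0$ for $i\gg0$ $\iff$ $\Ext^i_A(D^\dagger,N)=0$ for $i\gg0$''.

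Next I would pass to the derived category. Because $M$ is \CM\ of codimension $g$, local duality over $A$ gives $\Ext^j_A(M,A)=0$ for $j\ne g$, i.e. $\mathbf{R}\Hom_A(M,A)\simeq M^\dagger[-g]$; applying $\mathbf{R}\Hom_A(-,A)$ and biduality gives $M\simeq\mathbf{R}\Hom_A(M^\dagger,A)[g]$, and likewise $N\simeq\mathbf{R}\Hom_A(N^\dagger,A)[g]$ and $D\simeq\mathbf{R}\Hom_A(D^\dagger,A)[r]$. Derived Hom--tensor adjunction then gives, for all $i$,
\[ \Ext^i_A(L,M)\ \cong\ H^{i+g}\,\mathbf{R}\Hom_A\bigl(L\otimes^{\mathbf L}_AM^\dagger,\ A\bigr), \]
and in the same way $\Ext^i_A(N^\dagger,D)\cong H^{i+r}\mathbf{R}\Hom_A(N^\dagger\otimes^{\mathbf L}_AD^\dagger,A)$ while $\Ext^i_A(D^\dagger,N)\cong H^{i+g}\mathbf{R}\Hom_A(N^\dagger\otimes^{\mathbf L}_AD^\dagger,A)$. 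The last two are the same graded object shifted by $r-g$, so $\Ext^i_A(N^\dagger,D)\cong\Ext^{i+r-g}_A(D^\dagger,N)$ and part (2) drops out. For part (1), set $C:=L\otimes^{\mathbf L}_AM^\dagger$, so $H^{-j}(C)=\Tor^A_j(L,M^\dagger)$ and $C$ has finitely generated cohomology concentrated in degrees $\le0$; by the displayed isomorphism $\Ext^i_A(L,M)=0$ for $i\gg0$ iff $\mathbf{R}\Hom_A(C,A)$ has bounded cohomology, and since $C\in D^{\le0}$ one automatically has $\mathbf{R}\Hom_A(C,A)\in D^{\ge0}$, so this just means $\mathbf{R}\Hom_A(C,A)$ is a bounded complex with finitely generated cohomology. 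As $A$ is Gorenstein, it has finite injective dimension and $\mathbf{R}\Hom_A(-,A)$ has the biduality property, so $\mathbf{R}\Hom_A(C,A)$ bounded forces $C\simeq\mathbf{R}\Hom_A(\mathbf{R}\Hom_A(C,A),A)$ bounded, i.e. $\Tor^A_j(L,M^\dagger)=0$ for $j\gg0$; the converse is clear, and combined with the linkage reduction this gives (1).

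The genuine work, and where I expect trouble, is the first step: making the two linkage exact sequences precise. One must track carefully, via Martsinkovsky--Strooker's operator $\Om_{\bar A}\Tr_{\bar A}$ and the construction of Section~3, that $N$ agrees with $\Om_{\bar A}(M^\dagger)$ and $M$ with $\Om_{\bar A}(N^\dagger)$ only up to free $\bar A$-summands, so that the displayed sequences (with middle and one outer term free over $\bar A$) really exist; all of the ``up to a free module'' care of the theorem lives here. A smaller technical point is the biduality step for the unbounded-above complex $C$: that $C\to\mathbf{R}\Hom_A(\mathbf{R}\Hom_A(C,A),A)$ is an isomorphism is seen by checking it in each cohomological degree after truncating $C$ to a bounded window (where biduality over a Gorenstein ring is standard), the finite injective dimension of $A$ ensuring the truncation is harmless in any fixed degree. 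Everything else is formal manipulation in $D(A)$.
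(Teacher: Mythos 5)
Your proposal is correct, but it takes a genuinely different route from the paper. The paper proves part (1) through a long chain of eight explicit equivalences (Theorem 4.1), built on the cone construction of Section 3 (which produces a maximal CM approximation of $M$ from a resolution of $N$), reduction to the maximal CM case via CM approximation, passage mod a maximal regular sequence, and Matlis duality over the resulting Artinian Gorenstein ring; part (2) is then obtained by reducing $D$ to a finite-length quotient $D/\bx D$, invoking Matlis duality to turn $\Ext^\bullet_A(M,-)$ into $\Tor^A_\bullet(M,-)$, applying part (1), and translating back. Your argument instead extracts from linkage only the short exact sequence $0\rt M^\dagger\rt G\rt N\rt 0$ with $G$ a free $A/\q$-module (and symmetrically $0\rt N^\dagger\rt G'\rt M\rt 0$) — which incidentally falls straight out of the paper's proof of Lemma 3.4, no free summands need to be ``absorbed'' — and then closes everything with derived Hom--tensor adjunction and biduality over the Gorenstein ring $A$: writing $M\simeq\mathbf{R}\Hom_A(M^\dagger,A)[g]$, etc., gives $\Ext^i_A(L,M)\cong H^{i+g}\mathbf{R}\Hom_A(L\otimes^{\mathbf L}M^\dagger,A)$ and, for (2), the clean graded isomorphism $\Ext^i_A(N^\dagger,D)\cong\Ext^{i+r-g}_A(D^\dagger,N)$ for all $i$, which is a sharper statement than the eventual-vanishing equivalence the paper records. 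What your approach buys is conceptual economy — both parts become a single biduality computation in $D(A)$ — at the cost of the technical biduality point for unbounded-above $C=L\otimes^{\mathbf L}_AM^\dagger$. You correctly flag this, and the truncation argument you sketch does go through: for fixed $n$, take $\tau^{\geq -m}C$ with $m\gg n$; the cohomology of $\mathbf{R}\Hom_A(\tau^{\leq -m-1}C,A)$ sits in degrees $\geq m+1$ and that of its double dual in degrees $\leq d-m-1$, so the map $\eta_C$ agrees with $\eta_{\tau^{\geq -m}C}$ in degree $-n$, and the latter is an isomorphism because $\tau^{\geq -m}C\in D^b_{fg}(A)$. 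The paper's argument, while longer, stays entirely at the level of explicit complexes, long exact sequences, and Matlis duality, and so makes no demands on the reader beyond classical commutative algebra.
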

We should note that this result is new even in the case for cyclic modules. 
A remarkable consequence of Theorem \ref{third} is the following result
\begin{corollary}\label{third-cor}
Let $(A,\m)$ be a Gorenstein local ring of dimension $d$. Let $M\in \CMg(A)$ and $C \in \CMr(A)$. Assume $M \sim_\q N$ and $C \sim_\n D$ where $\q, \n$ are Gorenstein ideals in $A$. Then
\[
\Ext^i_A(M, C)  = 0 \  \text{for all $i \gg 0$} \iff \Ext^i_A(D, N) = 0 \  \text{for all $i \gg 0$}.
\]
In particular
\[
\Ext^i_A(M, M)  = 0 \  \text{for all $i \gg 0$} \iff \Ext^i_A(N, N) = 0 \  \text{for all $i \gg 0$}.
\]
\end{corollary}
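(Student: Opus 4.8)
The plan is to deduce the corollary from part (2) of Theorem~\ref{third}, together with the description of module linkage of Cohen--Macaulay modules recalled in Section~2. First I would apply Theorem~\ref{third}(2) to the linkage $M \sim_\q N$, taking the Cohen--Macaulay module of codimension $r$ appearing there to be $C$ itself (legitimate since $C \in \CMr(A)$). This yields
\[
\Ext^i_A(M, C) = 0 \ \text{for all } i \gg 0 \iff \Ext^i_A(C^\dagger, N) = 0 \ \text{for all } i \gg 0,
\]
where $C^\dagger = \Ext^r_A(C, A)$. So the whole problem is reduced to showing that $\Ext^i_A(C^\dagger, N)$ and $\Ext^i_A(D, N)$ vanish for $i \gg 0$ simultaneously, and this is exactly where the second linkage $C \sim_\n D$ is used.

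For that step, set $\ov{A} := A/\n$. Since $\n$ is a Gorenstein ideal with $\n \sub \ann C$ and $\grade \n = r = \codim C$, the ring $\ov A$ is Gorenstein of dimension $d - r$, both $C$ and $D$ are maximal Cohen--Macaulay over $\ov A$, and $C^\dagger = \Ext^r_A(C, A) \cong \Hom_{\ov A}(C, \ov A)$ because $\ov A$ is Gorenstein. By the structure of Martsinkovsky--Strooker linkage over the Gorenstein ring $\ov A$, the linked module $D$ is, up to free $\ov A$-summands, a first syzygy over $\ov A$ of $C^\dagger$; that is, there is a short exact sequence
\[
0 \xar C^\dagger \xar \ov A^{\,s} \xar D \xar 0
\]
of $\ov A$-modules (hence of $A$-modules), after possibly adding a common free $\ov A$-summand to $C^\dagger$ and to $D$.

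To conclude, note that $\n$, being a Gorenstein ideal, is perfect of grade $r$, so $\projdim_A \ov A = r$ and hence $\Ext^i_A(\ov A^{\,s}, N) = 0$ for all $i > r$. Applying $\Hom_A(-, N)$ to the displayed short exact sequence, the contributions of $\ov A^{\,s}$ disappear in degrees $> r$, so $\Ext^i_A(C^\dagger, N) \cong \Ext^{i+1}_A(D, N)$ for all $i > r$; the same observation disposes of the auxiliary free summands. Therefore $\Ext^i_A(C^\dagger, N) = 0$ for $i \gg 0$ if and only if $\Ext^i_A(D, N) = 0$ for $i \gg 0$, which together with the first displayed equivalence gives the corollary; the ``in particular'' assertion is the special case $C = M$, $\n = \q$, $D = N$. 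Granting Theorem~\ref{third}, nothing deep remains: the only point that needs care is the linkage bookkeeping in the middle step --- the correct direction of the syzygy sequence relating $D$ to $\Ext^r_A(C, A)$, and the harmless free $\ov A$-summands --- after which the finiteness $\projdim_A(A/\n) < \infty$ does the rest, so I would expect that bookkeeping, rather than any real obstacle, to be the main thing to pin down.
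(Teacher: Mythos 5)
Your argument is correct and follows the same route as the paper: first invoke Theorem~\ref{third}(2) for the linkage $M \sim_\q N$ to reduce to the vanishing of $\Ext^i_A(C^\dagger,N)$, then use the short exact sequence $0 \to C^\dagger \to G \to D \to 0$ of $A/\n$-modules coming from the definition of horizontal linkage (with $G$ free over $A/\n$) together with $\projdim_A(A/\n)=r<\infty$ to transfer vanishing between $C^\dagger$ and $D$. One small wording quibble: from that sequence it is $C^\dagger$ that is the syzygy of $D$ over $A/\n$, not the other way around, and since the Martsinkovsky--Strooker construction is taken with respect to a minimal presentation no free-summand adjustment is actually needed; neither point affects the correctness of your argument.
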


 We also prove the following surprising invariant of quasi-linkage.
\begin{theorem}\label{first}
Let $(A,\m)$ be a Gorenstein local ring and let $M,N,N^\prime \in \CMg(A)$.
Assume $M$ is quasi-evenly linked to $N$ and that it is quasi-oddly linked to $N^\prime$. Then
\begin{enumerate}[\rm (1)]
\item
$\End(M)/\rad \End(M) \cong \End(N)/\rad \End(N)$.
\item
$\End(M)/\rad \End(M) \cong \left(\End(N^\prime)/\rad \End(N^\prime) \right)^{op}$.
\end{enumerate}
\end{theorem}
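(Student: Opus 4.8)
The plan is to reduce the whole statement to a single step of quasi-linkage, determine how the ring $\End(-)/\rad\End(-)$ changes under one such step, and then iterate along the defining chains. So assume first that $M \sim_{\q} N$ via a single quasi-Gorenstein ideal $\q$, and set $\Lambda = A/\q$; by the definition of linkage of modules, $\Lambda$ is a Gorenstein local ring over which $M$ and $N$ are both maximal \CM\ $\Lambda$-modules with no free direct summand. Since $\q$ annihilates $M$ and $N$, every $A$-linear endomorphism of $M$ (resp.\ of $N$) is automatically $\Lambda$-linear, so $\End_A(M) = \End_\Lambda(M)$ and $\End_A(N) = \End_\Lambda(N)$ as rings; hence it suffices to argue entirely over $\Lambda$.

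Next I would record the structural description of the linked module: in the stable category $\CMS(\Lambda)$ one has $N \cong \Om_\Lambda^{-1}\bigl(\Hom_\Lambda(M,\Lambda)\bigr)$. Indeed $N = \Om_\Lambda \Tr_\Lambda M$ by definition, dualizing a minimal presentation of $M$ gives $\Om_\Lambda^2 \Tr_\Lambda M \cong \Hom_\Lambda(M,\Lambda)$ because $M$ is maximal \CM\ over the Gorenstein ring $\Lambda$, and $\Om_\Lambda$ is an autoequivalence of $\CMS(\Lambda)$. Now I combine three facts. First, $(-)^\vee = \Hom_\Lambda(-,\Lambda)$ is an exact contravariant functor fixing $\Lambda$, so it descends to a contravariant autoequivalence of $\CMS(\Lambda)$ which, on maximal \CM\ modules, is a duality by reflexivity; consequently $\Endu_\Lambda(M^\vee) \cong \Endu_\Lambda(M)^{op}$ as rings. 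Second, since $\Om_\Lambda$ is an autoequivalence of $\CMS(\Lambda)$ we get $\Endu_\Lambda(\Om_\Lambda^{-1}M^\vee) \cong \Endu_\Lambda(M^\vee)$. Combining, $\Endu_\Lambda(N) \cong \Endu_\Lambda(M)^{op}$.

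To pass from $\Endu$ back to $\End$, I would use: if $X$ is a maximal \CM\ $\Lambda$-module with no free summand, the canonical surjection $\End_\Lambda(X) \to \Endu_\Lambda(X)$ has kernel equal to the endomorphisms factoring through a free module, none of which can be a unit (else $X$ would be a summand of a free module); as $\End_\Lambda(X)$ is a module-finite algebra over the local ring $\Lambda$ it is semiperfect, so this kernel lies in $\rad\End_\Lambda(X)$, whence $\End_\Lambda(X)/\rad\End_\Lambda(X) \cong \Endu_\Lambda(X)/\rad\Endu_\Lambda(X)$. Applying this to $X = M$ and $X = N$, using the previous paragraph, and using that $\rad$ is compatible with $(-)^{op}$, gives for one quasi-linkage step
\[
\End_A(N)/\rad\End_A(N) \cong \bigl(\End_A(M)/\rad\End_A(M)\bigr)^{op}.
\]

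Finally I would iterate: along a chain witnessing that $M$ is quasi-evenly (resp.\ quasi-oddly) linked to $N$ (resp.\ $N^\prime$), every intermediate module again lies in $\CMg(A)$ and has no free summand — the \CM\ property and the codimension are preserved by quasi-linkage, e.g.\ by \cite[Corollary 15]{MS}, and the modules appearing in a linkage chain are stable by the definition of module linkage — so the single-step isomorphism applies at each link. An even number of applications of $(-)^{op}$ is the identity, which gives (1); an odd number gives (2), since $(-)^{op}$ is an involution. The parts I expect to require the most care are the structural identification of the linked module in the quasi-Gorenstein generality, and above all the bridge in the third paragraph between the stable and ordinary endomorphism rings, where one must keep track of the ring (not merely additive) structure and of the $(-)^{op}$ through the semiperfect, possibly noncommutative, algebras in play.
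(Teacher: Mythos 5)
Your approach is essentially the paper's: reduce to a single quasi-linkage step over $\Lambda = A/\q$, identify $N$ with $\Omega_\Lambda^{-1}(M^*)$ in $\CMS(\Lambda)$, and combine the duality $\Endu_\Lambda(M^*) \cong \Endu_\Lambda(M)^{op}$ with the fact that the syzygy functor preserves the stable endomorphism ring, then iterate and count parity. The only substantive difference is that the paper constructs the isomorphism $\Endu(M) \cong \Endu(\Omega M)$ by hand (its Propositions 8.3--8.5 and Corollary 8.7) rather than invoking that $\Omega$ is an autoequivalence of $\CMS(\Lambda)$; that is a matter of packaging, not of method.

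There is, however, a genuine gap in your bridge from $\Endu_\Lambda(X)$ to $\End_\Lambda(X)/\rad\End_\Lambda(X)$. You argue that because $\beta(X,X)$ (the kernel of $\End_\Lambda(X)\to\Endu_\Lambda(X)$) contains no units and $\End_\Lambda(X)$ is semiperfect, one may conclude $\beta(X,X)\subseteq\rad\End_\Lambda(X)$. That implication is false: in $R=k\times k$, which is semiperfect, the two-sided ideal $I=k\times 0$ contains no units, yet $I\not\subseteq\rad R=0$. What characterises membership of an ideal in the Jacobson radical is that $1-a$ is a unit for every element $a$, not that $a$ itself fails to be a unit, and a unit-free two-sided ideal of a semiperfect ring need not be superfluous. (Moreover, $\End_\Lambda(X)$ need not be semiperfect at all unless $\Lambda$ is henselian, which the theorem does not assume.) The correct argument is the paper's Proposition 8.2: because $X$ has no free summand, any $A$-linear map $X\to A$ has image in $\m$, hence every $f\in\beta(X,X)$ satisfies $f(X)\subseteq\m X$; then for any $g\in\End_\Lambda(X)$ the map $1-gf$ is surjective by Nakayama and therefore an isomorphism, which gives $\beta(X,X)\subseteq\rad\End_\Lambda(X)$ directly. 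With that substitution your proof goes through.
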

Here if $\Gamma$ is a  ring then $\Gamma^{op}$ is its opposite ring.

We now describe in brief the contents of this paper. In section two we recall some preliminaries regarding linkage of modules as given in \cite{MS}. In section three we give a construction which is needed to prove our resuts. We prove Theorem \ref{third}(1) in section four and Theorem \ref{third}(2) in section five. We recall some facts regarding cohomological operators in section six. This is needed in section seven where we prove Theorem \ref{second}. Finally in section eight we prove Theorem \ref{first}.
\section{Some preliminaries on Liason of Modules }
In this section we recall the definition of linkage of modules as given in \cite{MS}.  Throughout 
$(A,\m)$ is  a Gorenstein local ring of dimension $d$.

\s Let us recall the definition of transpose of a module. Let $F_1 \xrightarrow{\phi} F_0 \rt M \rt 0$ be a minimal presentation of $M$.  Let $(-)^* = \Hom(-,A)$. The \textit{transpose} $\Tr(M)$ is defined by the exact sequence
\[
0 \rt M^* \rt F_0^* \xrightarrow{\phi^*} F_1^* \rt \Tr(M) \rt 0.
\] 
Also let $\Om(M)$ be the first syzygy of $M$.
\begin{definition}
Two $A$-modules $M$ and $N$ are said to be \textit{horizontally linked} if 
$M \cong \Om(\Tr(N))$ and $N \cong \Om(\Tr(M))$.
\end{definition}
Next we define linkage in general.
\begin{definition}
Two $A$-modules $M$ and $N$ are said to be linked via a Gorenstein ideal $\q$ if
\begin{enumerate}
\item
$\q \subseteq \ann M \cap \ann N$, and
\item
$M$ and $N$ are horizontally linked as $A/\q$-modules.
\end{enumerate}
We write it as $M \sim_\q N$.
\end{definition}
If $\q$ is a complete intersection we say $M$ is CI-linked to $N$ via $\q$. If $\q$ is a quasi Gorenstein ideal then we say $M$ is quasi-linked to $N$ via $\q$.

\begin{remark}
 It can be shown that ideals $I$ and $J$ are linked by a quasi-Gorenstein ideal $\q$ (definition as in the introduction) if and only if the  module $A/I$ is quasi-linked to $A/J$ by $\q$, see \cite[Proposition 1, p.\ 592]{MS}. 
\end{remark}

\s We say $M, N$ are in  \emph{same linkage class} of modules  if there is a sequence of $A$-modules $M_0,\ldots, M_n$  and Gorenstein ideals $\q_0 \ldots,\q_{n-1}$ such that
\begin{enumerate}[\rm (i)]
\item
$M_j \sim_{\q_j} M_{j+1}$, for $j = 0,\ldots, n-1$.
\item
 $M_0 = M$ and $M_n =N$. 
\end{enumerate}
If $n$ is even then we say that $M$ and $N$ are \emph{evenly linked}.
Analogously we can define the notion of  CI-linkage class, quasi-linkage class, even CI-linkage class and even quasi-linkage class (of modules).

\section{A Construction}
In this section we describe a construction essentially due to Ferrand. Throughout 
$(A,\m)$ is  a Gorenstein local ring of dimension $d$.

\s  \label{dual} We note the following well-known result, see \cite[3.3.10]{BH}. 
  Let $D \in \CMg(A)$. Then $\Ext^i_A(D, A) = 0 $ for $i \neq g$. Set $D^\dagger = \Ext^g_A(D, A)$. Then
$D^\dagger \in \CMg(A)$. Furthermore $(D^\dagger)^\dagger \cong D$.

The following result is well-known. However we give a proof as we do not have a reference.
\begin{lemma}\label{qgor-mod}
Let $(A,\m)$ be  a Gorenstein local ring of dimension $d$ and let $M \in \CMg(A)$. Let $\q$ be a quasi-Gorenstein ideal of grade $g$ contained in $\ann M$. Set $B = A/\q$. Then
$\Ext^g_A(M, A) \cong \Hom_B(M, B)$.
\end{lemma}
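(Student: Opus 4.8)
The plan is to deduce the isomorphism from a change-of-rings spectral sequence, exploiting that $B = A/\q$ is a Gorenstein quotient of $A$ of codimension $g$.

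First I would record the bookkeeping. Since $A$ is Cohen--Macaulay and $\grade\q = g$, we have $\height\q = g$, hence $\dim B = d-g$; and $B$ is a Gorenstein local ring because $\q$ is quasi-Gorenstein. Thus $B$ is a Cohen--Macaulay $A$-module with $\codim_A B = g$, i.e.\ $B \in \CMg(A)$, so by \ref{dual} (applied to $D = B$) we get $\Ext^i_A(B,A) = 0$ for $i \neq g$, while $B^\dagger := \Ext^g_A(B,A)$ lies in $\CMg(A)$. Moreover $B^\dagger$ is the canonical module $\omega_B$ of $B$, via the standard identification $\omega_B \cong \Ext^g_A(B, \omega_A)$ for the surjection $A \rt B$ together with $\omega_A \cong A$ (see, e.g., \cite[3.3.7]{BH}). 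Since $B$ is Gorenstein, $\omega_B \cong B$, so $\Ext^g_A(B, A) \cong B$ as $B$-modules.

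Next, $\q \subseteq \ann M$ makes $M$ a $B$-module, and for every $A$-module $X$ there is a natural isomorphism $\Hom_A(M, X) \cong \Hom_B(M, \Hom_A(B, X))$. The functor $\Hom_A(B, -)$, from $A$-modules to $B$-modules, is right adjoint to the exact restriction of scalars, hence carries injective $A$-modules to injective $B$-modules. The Grothendieck spectral sequence of this composite therefore reads
\[
E_2^{p,q} = \Ext^p_B\bigl(M,\, \Ext^q_A(B,A)\bigr) \Longrightarrow \Ext^{p+q}_A(M,A).
\]
By the first step $E_2^{p,q} = 0$ whenever $q \neq g$, so the spectral sequence degenerates and yields isomorphisms $\Ext^p_B\bigl(M,\Ext^g_A(B,A)\bigr) \cong \Ext^{p+g}_A(M,A)$ for all $p \ge 0$. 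Taking $p = 0$ and substituting $\Ext^g_A(B,A) \cong B$ gives $\Hom_B(M,B) \cong \Ext^g_A(M,A)$, as required.

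The only step that needs care is the construction of this change-of-rings spectral sequence and the verification that $\Hom_A(B,-)$ preserves injectives; everything else is routine, and in fact the argument never uses that $M$ is maximal Cohen--Macaulay, so it holds for any finitely generated $B$-module. If one prefers to avoid spectral sequences, the same conclusion follows by choosing an $A$-regular sequence $\bx \subseteq \q$ of length $g$: Rees's lemma gives $\Ext^g_A(-, A) \cong \Hom_{A/(\bx)}(-, A/(\bx))$ on $A/(\bx)$-modules, and applying this to $M$ and to $B$, together with the adjunction $\Hom_{A/(\bx)}(M, A/(\bx)) \cong \Hom_B\bigl(M, \Hom_{A/(\bx)}(B, A/(\bx))\bigr)$, reduces the statement to the known fact that the canonical module of the Gorenstein ring $B$ is isomorphic to $B$.
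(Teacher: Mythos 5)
Your main argument is correct but takes a different route from the paper. The paper works through an intermediate complete intersection $C = A/(\by)$ with $\by \subseteq \q$ a length-$g$ regular sequence: it applies Rees's theorem (\cite[3.1.16]{BH}) to get $\Ext^g_A(M,A) \cong \Hom_C(M,C)$, then uses Hom-tensor adjunction along $C \rt B$ together with $\Hom_C(B,C) \cong B$ (\cite[3.3.7]{BH}). Your primary proof instead invokes the Grothendieck change-of-rings spectral sequence
\[
E_2^{p,q} = \Ext^p_B\bigl(M, \Ext^q_A(B,A)\bigr) \Rightarrow \Ext^{p+q}_A(M,A),
\]
which degenerates at the row $q = g$ because $B \in \CMg(A)$, and then identifies $\Ext^g_A(B,A)$ with $\omega_B \cong B$. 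This is valid: restriction of scalars along $A \rt B$ is exact, so its right adjoint $\Hom_A(B,-)$ preserves injectives, and the hypotheses of the spectral sequence are met. Your approach is slightly heavier machinery but buys more: it yields $\Ext^{p+g}_A(M,A) \cong \Ext^p_B(M,B)$ for all $p$, not just $p=0$, and makes transparent that the Cohen--Macaulayness of $M$ plays no role (only $\q \subseteq \ann M$ is used) --- a remark that is equally true, if less visible, in the paper's proof. Your ``if one prefers to avoid spectral sequences'' alternative is essentially word-for-word the paper's argument. One tiny slip: you describe the unused hypothesis on $M$ as ``maximal Cohen--Macaulay,'' but the lemma's hypothesis is $M \in \CMg(A)$, i.e.\ Cohen--Macaulay of codimension $g$; the point you are making (that this is not needed) is nonetheless correct.
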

\begin{proof}
Let $\by = y_1,\ldots, y_g \in \q$  be a regular sequence. Set $C = A/(\by)$. Then
we have a natural ring homomorphism $C \rt B$. As $C,B$ are Gorenstein rings we have
$\Hom_C(B,C) \cong B$, see \cite[3.3.7]{BH}. We now note that
\begin{align*}
\Ext^g_A(M, A) &\cong \Hom_C(M, C), \ \ \ \ \ \ \text{see \cite[3.1.16]{BH}} \\
&= \Hom_C(M\otimes_B B, C), \\
&\cong \Hom_B(M, \Hom_C(B,C)), \\
&= \Hom_B(M, B).
\end{align*}
\end{proof}
\s \label{const} \emph{Construction:}   Let $M \in \CMg(A)$ and let $\q$ be a quasi-Gorenstein ideal in $A$ of codimension $g$ contained in $\ann M$. Let $M \sim_\q N$. Let $\Pb$ be minimal free resolution of $M$ and let $\Qb$ be a minimal free resolution of $\Pb_0/\q \Pb_0$.
We have a natural map $ \Pb_0/\q \Pb_0 \rt M \rt 0$. We lift this to a chain map
$\phi \colon \Qb \rt \Pb$. We then dualize this map to get a chain map $\phi^* \colon \Pb^* \rt \Qb^*$. Let $\Cb = \cone(\phi^*)$.

\begin{lemma}\label{coh}
\[
H^i(\Cb) = \begin{cases}
N, &\text{if $i = g$,} \\
0, &\text{if $i \neq g$}.
\end{cases}
\]
\end{lemma}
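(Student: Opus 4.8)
The plan is to read off $H^\bullet(\Cb)$ from the long exact sequence of the mapping cone, using that both dualized resolutions $\Pb^*$ and $\Qb^*$ have cohomology concentrated in a single degree. First I would record these cohomologies. Since $\Pb$ is a free resolution of $M\in\CMg(A)$, we have $H^i(\Pb^*)=\Ext^i_A(M,A)$, which by \ref{dual} is $0$ for $i\neq g$ and equals $M^\dagger$ for $i=g$. Next, $\Qb$ resolves $\Pb_0/\q\Pb_0\cong B^{b_0}$, where $B=A/\q$ and $b_0=\rank\Pb_0$; as $\q$ is quasi-Gorenstein of codimension $g$, the ring $B$ is Gorenstein, hence Cohen--Macaulay, and $\codim_A B=g$, so $B^{b_0}\in\CMg(A)$. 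Thus by \ref{dual} $H^i(\Qb^*)=\Ext^i_A(B^{b_0},A)=0$ for $i\neq g$, while Lemma \ref{qgor-mod} gives $H^g(\Qb^*)=\Ext^g_A(B^{b_0},A)\cong\Hom_B(B^{b_0},B)=B^{b_0}$. Finally, $\phi\colon\Qb\to\Pb$ is by construction a lift of the natural surjection $\pi\colon\Pb_0/\q\Pb_0\to M$, so the dual map $\phi^*$ computes $\Ext^\bullet_A(\pi,A)$, and under the identifications above $H^g(\phi^*)=\Ext^g_A(\pi,A)=:\pi^\dagger\colon M^\dagger\to(B^{b_0})^\dagger$.

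Now the short exact sequence of complexes $0\to\Qb^*\to\Cb\to\Pb^*[1]\to 0$ yields a long exact cohomology sequence. Because $H^i(\Pb^*)=H^i(\Qb^*)=0$ for $i\neq g$, it forces $H^i(\Cb)=0$ for $i\notin\{g-1,g\}$ and leaves the exact sequence
\[
0\longrightarrow H^{g-1}(\Cb)\longrightarrow M^\dagger\xrightarrow{\ \pi^\dagger\ }(B^{b_0})^\dagger\longrightarrow H^g(\Cb)\longrightarrow 0
\]
(the middle map being $\pm H^g(\phi^*)$, i.e.\ $\pm\pi^\dagger$). Hence it suffices to prove that $\pi^\dagger$ is injective with cokernel isomorphic to $N$.

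For this I would use the $B$-free presentation $0\to\Om_B M\to B^{b_0}\xrightarrow{\pi}M\to 0$; since $\Pb_0/\q\Pb_0\to M$ is a projective cover over $B$, its kernel is the first $B$-syzygy $\Om_B M$. Because $M$ is annihilated by $\q$ and $\depth_B M=\dim B$, $M$ is a maximal Cohen--Macaulay $B$-module, whence $\Om_B M$ is maximal Cohen--Macaulay over the Cohen--Macaulay ring $B$; all three terms therefore lie in $\CMg(A)$, on which $(-)^\dagger=\Ext^g_A(-,A)$ is exact by \ref{dual}. Dualizing this sequence gives $0\to M^\dagger\xrightarrow{\pi^\dagger}(B^{b_0})^\dagger\to(\Om_B M)^\dagger\to 0$, so $\pi^\dagger$ is injective, $H^{g-1}(\Cb)=0$, and $H^g(\Cb)\cong(\Om_B M)^\dagger$.

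It remains to identify $(\Om_B M)^\dagger$ with $N$. By Lemma \ref{qgor-mod}, $(\Om_B M)^\dagger\cong\Hom_B(\Om_B M,B)$. Since $M$ is maximal Cohen--Macaulay over the Gorenstein ring $B$ we have $\Ext^1_B(M,B)=0$, so applying $\Hom_B(-,B)$ to $0\to\Om_B M\to B^{b_0}\xrightarrow{\pi}M\to 0$ exhibits $\Hom_B(\Om_B M,B)$ as the cokernel of $\Hom_B(\pi,B)\colon\Hom_B(M,B)\to B^{b_0}$, which, reading off the minimal $B$-presentation of $M$, is exactly $\Om_B(\Tr_B M)$ by definition of the transpose. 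Finally $M\sim_\q N$ means $M$ and $N$ are horizontally linked as $B$-modules, so $N\cong\Om_B(\Tr_B M)$; combining the isomorphisms gives $H^g(\Cb)\cong N$, as desired. I expect the main technical points to be verifying the compatibilities among the various dualizations (so that the connecting map of the cone is genuinely $\pm\pi^\dagger$ after the chosen identifications) and the isomorphism $\Hom_B(\Om_B M,B)\cong\Om_B(\Tr_B M)$; the rest is routine bookkeeping with free resolutions.
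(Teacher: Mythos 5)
Your proposal is correct and follows essentially the same route as the paper: you use the short exact sequence of complexes $0\to\Qb^*\to\Cb\to\Pb^*[1]\to 0$, the vanishing of $H^i(\Pb^*)$ and $H^i(\Qb^*)$ away from degree $g$, and the identification via Lemma \ref{qgor-mod} of the degree-$g$ connecting map with $\Hom_B(\epsilon,B)$, whose injectivity and cokernel $\cong N$ come from dualizing the minimal $B$-presentation $0\to\Om_BM\to B^{b_0}\to M\to 0$. The only difference from the paper's write-up is cosmetic: you spell out explicitly that $\Hom_B(\Om_BM,B)\cong\Om_B(\Tr_BM)\cong N$, whereas the paper compresses this into ``$N\cong K^*$.''
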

\begin{proof}
We have an exact sequence of complexes
\[
0 \rt \Qb^* \rt \Cb \rt \Pb^*(-1) \rt 0.
\]
Notice
\[
H^i(\Pb^*) = \Ext^i_A(M, A) = \begin{cases}
M^\dagger = \Ext^g_A(M, A), &\text{if $i = g$,} \\
0, &\text{if $i \neq g$}.
\end{cases}
\]
We also have
\[
H^i(\Qb^*) = \Ext^i_A(\Pb_0/\q \Pb_0, A) = 
0 \ \text{if $i \neq g$}.
\]
It is now immediate that
\[
H^i(\Cb) = 0  \quad \text{for} \ i \neq g-1, g.
\]
Set $B = A/\q$ and $\ov{P} = \Pb_0/\q \Pb_0$. Then by \ref{qgor-mod} we have 
\begin{align*}
\Ext^g_A(M, A) &\cong \Hom_B(M, B), \text{and} \\ 
\Ext^g_A(\ov{P}, A) &\cong \Hom_B(\ov{P}, B).
\end{align*}
We have an exact sequence of MCM $B$-modules
\[
0 \rt K \rt \ov{P} \xrightarrow{\epsilon} M \rt 0.
\]
Dualizing \wrt \ $B$ we get an exact sequence
\[
0 \rt \Hom_B(M, B)  \xrightarrow{\epsilon^*} \Hom_B(\ov{P}, B) \rt K^* \rt 0.
\]
As $M \sim_\q N$ we get that $N \cong K^*$. 
We note that the map $H^g(\Pb^*) \rt H^g(\Qb^*)$ is $\epsilon ^*$. As a consequence we obtain that
\[
H^{g-1}(\Cb) = 0 \quad \text{and} \quad H^g(\Cb) = N.
\]
\end{proof}
 \begin{remark}
 \begin{enumerate}
 \item 
 If $A$ is regular local, $M = A/I$,  $\q$ is a complete intersection and $I \sim_\q J$, then this construction was used by Ferrand to give a projective resolution of $A/J$, see \cite[Proposition 2.6]{PS}.
\item 
 If $M$ is perfect $A$-module of codimension $g$ and $\q$ is a Gorenstein ideal then 
 this construction was used by  Martsinkovsky and Strooker to give a projective resolution of $N$, see \cite[Proposition 10, p.\ 597]{MS}.
 \end{enumerate}
 \end{remark}

Our interest in this construction is due to the following:
\s \label{obs-2}\emph{Observation:} Let $B^g(\Cb)$ be the module of $g$-boundaries of $\Cb$ and
$Z^g(\Cb)$ be the module of $g$-cocycles of $\Cb$. Then $\projdim_A  B^g(\Cb)$ is finite and $Z^g(\Cb)$ is a maximal \CM \ $A$-module. Thus the sequence
\[
0 \rt B^g(\Cb) \rt Z^g(\Cb) \rt N \rt 0,
\]
is a maximal \CM \ approximation of $N$.\\
\emph{Proof} This follows from Lemma \ref{coh}.

\section{Proof of Theorem \ref{third}(1)}
In this section we prove Theorem \ref{third}(1). It is an easy consequence of the following result:

\begin{theorem}\label{rachel}
Let $(A,\m)$ be a Gorenstein local ring of dimension $d$. Let $M\in \CMg(A)$.
Assume $M \sim_\q N$ where
  $\q$ is  a Gorenstien ideal in $A$. Let $L$ be a maximal \CM \ $A$-module.
   Let $\Pb$ be minimal free resolution of $N$ and let $\Qb$ be a minimal free resolution of $\Pb_0/\q \Pb_0$. We do the construction as in \ref{const} and let $\Cb = \cone(\phi^*)$. Set $X = Z^g(\Cb)$ and $Y = B^g(\Cb)$.  For $s \geq 1$, let $X_s$ be the image of the map $\Pb^*_{s-1} \rt \Pb^*_{s}$. Let $\bx = x_1,\ldots,x_d$ be a maximal regular $A$-sequence. Set $\ov{A} = A/(\bx)$ and $\ov{L} = L/\bx L$. Then the following assertions are equivalent:
\begin{enumerate}[\rm(i)]
\item
$\Ext^i_A(L, M) = 0$ for all $i \gg 0$.
\item
$\Ext^i_A(L, X) = 0$ for all $i \gg 0$.
\item
For all $s \gg 0$ and for all $i \geq 1$ we have $\Ext^i_A(L, X_s) = 0$.
\item
$H^i(\Hom_A(L, \Pb^*)) = 0$ for all $i \gg 0$.
\item
$H^i(\Hom_A(L\otimes_A \Pb, A)) = 0$ for all $i \gg 0$.
\item
$H^i(\Hom_{\ov{A}}(\ov{L}\otimes_A \Pb, \ov{A})) = 0$ for all $i \gg 0$.
\item
$\Tor^A_i(\ov{L}, N) = 0$ for $i \gg 0$.
\item
$\Tor^A_i(L, N) = 0$ for $i \gg 0$.
\end{enumerate}
\end{theorem}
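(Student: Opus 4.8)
The plan is to prove the eight conditions mutually equivalent, essentially following the listed order, leaning on three facts from Sections 2 and 3. First, since linkage is symmetric, Lemma \ref{coh} applies with $M$ and $N$ interchanged, so the cone $\Cb$ of the construction (built here from a resolution $\Pb$ of $N$) has $H^g(\Cb)=M$ and $H^i(\Cb)=0$ for $i\neq g$; hence by Observation \ref{obs-2} the sequence $0\to Y\to X\to M\to 0$ with $X=Z^g(\Cb)$, $Y=B^g(\Cb)$ is a maximal \CM\ approximation of $M$, and moreover the brutal truncation $0\to X\to\Cb^g\to\Cb^{g+1}\to\cdots$ is exact, i.e. a free coresolution of $X$. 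Second, since $\q$ is a Gorenstein, hence perfect, ideal, $\Qb$ is bounded, so $\Cb^n=\Pb^*_{n-1}$ for $n\gg 0$; and $H^i(\Pb^*)=\Ext^i_A(N,A)=0$ for $i\neq g$, so $0\to X_{g+1}\to\Pb^*_{g+1}\to\Pb^*_{g+2}\to\cdots$ is exact and $X_s=\Om_A(X_{s+1})$ for $s>g$. Third, as $L$ is maximal \CM\ over the Gorenstein ring $A$, one has $\Ext^i_A(L,F)=0$ for free $F$ and $i\geq 1$, and $L^*=\Hom_A(L,A)$ is again maximal \CM. (Theorem \ref{third}(1) for general finitely generated $L$ will then follow by replacing $L$ with a high syzygy $\Om^d_A L$, which is maximal \CM\ and only shifts the $\Ext$'s and $\Tor$'s.)

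For (i)$\Leftrightarrow$(ii) I would note that over a Gorenstein ring finite projective dimension forces finite injective dimension, so $\Ext^i_A(L,Y)=0$ for $i>d$, and the long exact sequence of $0\to Y\to X\to M\to 0$ gives $\Ext^i_A(L,M)\cong\Ext^i_A(L,X)$ for $i\gg 0$. For (ii)$\Leftrightarrow$(iii)$\Leftrightarrow$(iv) I would apply $\Hom_A(L,-)$ to the two coresolutions above, split them into short exact sequences with free middle term, and use the third fact: this shows that, up to a fixed shift, $H^i(\Hom_A(L,\Pb^*))$, $\Ext^i_A(L,X_s)$ (for $s\gg 0$), and $\Ext^i_A(L,X)$ all coincide for $i\gg 0$ (the last comparison using $\Cb^n=\Pb^*_{n-1}$ and $X=\Om_A^m(X_{g+m-1})$ for $m\gg 0$), so each of (ii), (iii), (iv) is equivalent to ``$\Ext^i_A(L,X_{g+1})=0$ for $i\gg 0$''. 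This amounts to careful bookkeeping with syzygy shifts of $\Ext$.

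The identity (iv)$=$(v) is just tensor-hom adjunction, $\Hom_A(L\otimes_A\Pb,A)\cong\Hom_A(L,\Hom_A(\Pb,A))=\Hom_A(L,\Pb^*)$, an equality of complexes. For (v)$\Leftrightarrow$(vi) I would set $\mathbf G=\Hom_A(L,\Pb^*)$, whose terms are finite direct sums of copies of the maximal \CM\ module $L^*$; then $\bx$ is $\mathbf G$-regular, so $0\to\mathbf G\xrightarrow{x_1}\mathbf G\to\mathbf G/x_1\mathbf G\to 0$ and its iterates are short exact sequences of complexes, and their long exact sequences, combined with Nakayama's lemma (the $H^i(\mathbf G)$ are finitely generated), give ``$H^i(\mathbf G)=0$ for $i\gg 0$'' $\Leftrightarrow$ ``$H^i(\mathbf G/\bx\mathbf G)=0$ for $i\gg 0$''; finally $\mathbf G/\bx\mathbf G\cong\Hom_{\ov A}(\ov L\otimes_A\Pb,\ov A)$ because $L^*\otimes_A\ov A\cong\Hom_{\ov A}(\ov L,\ov A)$ (valid as $\Ext^{\geq 1}_A(L,A)=0$). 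For (vi)$\Leftrightarrow$(vii), $\ov A=A/(\bx)$ is Artinian Gorenstein, hence self-injective, so $\Hom_{\ov A}(-,\ov A)$ is an exact faithful duality on finitely generated $\ov A$-modules, and applied to the complex $\ov L\otimes_A\Pb$ of $\ov A$-modules it gives $H^i(\Hom_{\ov A}(\ov L\otimes_A\Pb,\ov A))\cong\Hom_{\ov A}(\Tor^A_i(\ov L,N),\ov A)$, which vanishes exactly when $\Tor^A_i(\ov L,N)$ does. Lastly (vii)$\Leftrightarrow$(viii) is one more reduction mod $\bx$: since $\bx$ is $L$-regular, $0\to L\xrightarrow{x_1}L\to L/x_1L\to 0$ gives long exact sequences in $\Tor^A_\bullet(-,N)$, and Nakayama again forces ``$\Tor^A_i(L,N)=0$ for $i\gg 0$'' $\Leftrightarrow$ ``$\Tor^A_i(L/x_1L,N)=0$ for $i\gg 0$''; iterate over $x_1,\dots,x_d$.

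I expect the main obstacle to be conceptual rather than computational: one must recognize that the single complex $\Pb^*$, the $A$-dual of a minimal resolution of $N$, does double duty -- through the cone construction its high cosyzygies compute the maximal \CM\ approximation of the \emph{linked} module $M$, while through tensor-hom adjunction followed by reduction to the self-injective Artinian ring $\ov A$ it computes the Matlis dual of $\Tor^A_\bullet(\ov L,N)$. Once that is seen, the rest is standard homological algebra over a Gorenstein ring; the only places requiring care are aligning the shifts coming from the two truncated coresolutions and verifying that the reduction-mod-$\bx$ arguments are reversible -- which is exactly where finite generation of the intermediate $\Ext$/$\Tor$ modules, Nakayama's lemma, and the hypothesis that $L$ be maximal \CM\ all get used.
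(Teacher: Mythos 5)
Your proposal is correct and follows essentially the same chain of equivalences and the same key ideas as the paper's proof: the cone construction yields the MCM approximation of $M$ for (i)$\Leftrightarrow$(ii), dimension shifting along the eventually-coinciding coresolutions $\Cb$ and $\Pb^*$ handles (ii)$\Leftrightarrow$(iii)$\Leftrightarrow$(iv), tensor-hom adjunction gives (iv)$\Leftrightarrow$(v), reduction modulo a maximal regular sequence together with Nakayama gives (v)$\Leftrightarrow$(vi) and (vii)$\Leftrightarrow$(viii), and self-injectivity of the Artinian Gorenstein quotient gives (vi)$\Leftrightarrow$(vii). The only cosmetic difference is that you unroll the paper's auxiliary Lemma \ref{mcm-complex} / Corollary \ref{mcm-complex-cor} inline rather than citing them, and you suggest a high syzygy in place of the MCM approximation when deducing Theorem \ref{third}(1); both are equivalent devices.
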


We need a few preliminary results before we are able to prove Theorem \ref{rachel}.

\s \label{trunc} Let $\K : \cdots \rt K_n \xrightarrow{d_n} K_{n+1} \rt \cdots $ be a co-chain complex. Let $s$ be an integer. By  $\K_{\geq s}$ we mean the co-chain complex
\[
0\rt K_s \xrightarrow{d_s} K_{s+1} \rt \cdots \rt K_n \xrightarrow{d_n} K_{n+1} \rt \cdots 
\]
Clearly $H^i(\K_{\geq s}) = H^i(\K)$ for all $i \geq s+1$.

\s \label{trunc-cone} Let $\Pb, \Qb, \Cb$ be as in Theorem \ref{rachel}. As $\q$ is a Gorenstein ideal, it has in particular finite projective dimension. It follows that for $i \geq d + 1$, we get
$\Cb_i = \Pb_{i+1}^*$ and the map $\Cb_{i} \rt \Cb_{i+1}$ is same as the map $\Pb_{i+1}^* \rt \Pb_{i+2}^*$. In particular if $s \geq d + 1$ then
$\Cb_{\geq s} = \Pb^*_{\geq s+1}$. 

We need the following:
\begin{lemma}\label{mcm-complex}
Let $(A,\m)$ be a Gorenstein local ring and let $\Db$ be a chain-complex with $\Db_n = 0$ for $n \leq -1$. Assume that $\Db_n$ is a maximal \CM \ $A$-module for  all $n \geq 0$. Let $x\in \m$ be $A$-regular. Let  $\ov{A} = A/(x)$ and let $\ov{\Db}$ be the
the  complex $\Db\otimes_A \ov{A}$. Let $\Db^*$ be the complex $\Hom_{A}(\Db, A)$ and let $\ov{\Db}^*$ be the complex $\Hom_{\ov{A}}(\ov{\Db}, \ov{A})$. Then the following are equivalent:
\begin{enumerate}[\rm(i)]
\item
$H^i(\Db^*) = 0 $ for $i \gg 0$.
\item
$H^i(\ov{\Db}^*) = 0 $ for $i \gg 0$.
\end{enumerate}
\end{lemma}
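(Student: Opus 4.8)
The plan is to reduce everything to a standard change-of-rings spectral sequence (or, more elementarily, to the long exact sequence coming from the short exact sequence $0 \to \Db \xrightarrow{x} \Db \to \ov{\Db} \to 0$, which is exact because each $\Db_n$ is maximal \CM, hence $x$-regular). Dualizing over $A$ the short exact sequence of complexes $0 \to \Db \xrightarrow{x} \Db \to \ov{\Db} \to 0$ and using that each $\Db_n$ is maximal \CM\ (so $\Ext^j_A(\Db_n, A) = 0$ for $j \geq 1$, and $\Ext^j_A(\ov{\Db}_n, A) = 0$ for $j \neq 1$ with $\Ext^1_A(\ov{\Db}_n, A) \cong \Hom_{\ov{A}}(\ov{\Db}_n, \ov{A})$ by the standard computation $\Ext^1_A(\ov{A}\text{-module}, A) \cong \Hom_{\ov{A}}(-, \ov{A})$), I get a short exact sequence of complexes
\[
0 \rt \Db^* \xrightarrow{x} \Db^* \rt \ov{\Db}^* \rt 0 .
\]
Here the middle identification uses that $\Hom_A(\Db_n, A)/x \cong \Hom_{\ov{A}}(\ov{\Db}_n, \ov{A})$, which again holds because $\Db_n$ is maximal \CM; the only subtlety is a degree shift, since $\Ext^1$ rather than $\Hom$ governs the dual of $\ov{\Db}_n$, but this is absorbed uniformly in all cohomological degrees and does not affect vanishing for $i \gg 0$.

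From this short exact sequence of complexes one reads off the long exact sequence in cohomology
\[
\cdots \rt H^i(\Db^*) \xrightarrow{x} H^i(\Db^*) \rt H^i(\ov{\Db}^*) \rt H^{i+1}(\Db^*) \xrightarrow{x} H^{i+1}(\Db^*) \rt \cdots .
\]
If (i) holds, say $H^i(\Db^*) = 0$ for $i \geq n_0$, then the long exact sequence immediately forces $H^i(\ov{\Db}^*) = 0$ for $i \geq n_0$, giving (ii). Conversely, suppose (ii) holds, say $H^i(\ov{\Db}^*) = 0$ for $i \geq n_0$. Then for $i \geq n_0$ the map $x \colon H^i(\Db^*) \rt H^i(\Db^*)$ is surjective. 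Now each $H^i(\Db^*)$ is a finitely generated $A$-module (the $\Db_n$ and hence $\Db_n^*$ are finitely generated, and the complex is bounded below), so Nakayama's lemma applied to $x \in \m$ gives $H^i(\Db^*) = 0$ for all $i \geq n_0$, which is (i).

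The main point to be careful about — and what I expect to be the only real obstacle — is the identification of the dual complexes: one must verify that dualizing the pointwise-split-looking sequence $0 \to \Db \xrightarrow{x} \Db \to \ov{\Db} \to 0$ over $A$ really produces a short exact sequence of complexes with no leftover higher Ext terms, and that the resulting complex built from the $\Ext^1_A(\ov{\Db}_n, A)$ is canonically $\ov{\Db}^*$ up to an overall degree shift. This is exactly where the maximal \CM\ hypothesis on the $\Db_n$ is used: it kills $\Ext^{\geq 1}_A(\Db_n, A)$ and collapses $\Ext^{\bullet}_A(\ov{\Db}_n, A)$ into a single degree. Once that bookkeeping is done, the Nakayama argument on the finitely generated cohomology modules closes both implications. (Alternatively, one can phrase the whole argument via the change-of-rings isomorphism $\Hom_{\ov{A}}(\ov{\Db}, \ov{A}) \simeq \Hom_A(\Db, A) \otimes_A^{\mathbf{L}} \ov{A}[-1]$ in the derived category and take cohomology, but the elementary long-exact-sequence version above avoids invoking derived categories.)
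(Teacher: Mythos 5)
Your proof is correct and takes essentially the same route as the paper: both produce the short exact sequence of complexes $0 \to \Db^* \xrightarrow{x} \Db^* \to \ov{\Db}^* \to 0$ (the paper by applying $\Hom_A(\Db_n,-)$ to $0 \to A \xrightarrow{x} A \to \ov{A} \to 0$ and using $\Ext^1_A(\Db_n,A)=0$; you by dualizing $0 \to \Db \xrightarrow{x} \Db \to \ov{\Db} \to 0$ over $A$ and using the same vanishing together with $\Hom_A(\ov{\Db}_n,A)=0$), and then run the long exact sequence together with Nakayama on the finitely generated cohomology modules. The degree-shift concern you raise is actually spurious---the complex with terms $\Ext^1_A(\ov{\Db}_n,A)$ sits in the same cohomological degrees as $\ov{\Db}^*$, so no shift occurs---but, as you correctly observe, it would be harmless for the $i\gg 0$ statement in any case.
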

\begin{proof}
Let $E$ be a  maximal \CM \ $A$-module. Notice $x$ is $E$-regular. Furthermore
$\Hom_A(E,A)$ is a maximal \CM \ $A$-module and $\Ext^i_A(M, A) = 0$ for $i \geq 1$. Set $\ov{E} = E/xE$.

The exact sequence $0 \rt A \xrightarrow{x} A \rt \ov{A} \rt 0$ induces an exact sequence
\[
0 \rt \Hom_A(E,A) \xrightarrow{x} \Hom_A(E,A) \rt \Hom_{\ov{A}}(\ov{E},\ov{A}) \rt 0.
\]
Thus we have an exact sequence of co-chain complexes of $A$-modules
\[
0 \rt \Db^*  \xrightarrow{x} \Db^*  \rt \ov{\Db}^* \rt 0.
\]
This in turn induces a long-exact sequence in cohomology 
\begin{equation}\label{les}
\cdots \rt H^i(\Db^*) \xrightarrow{x} H^i(\Db^*) \rt H^i(\ov{\Db}^*) \rt H^{i+1} (\Db^*) \rt \cdots
\end{equation}

We now prove
$(i) \implies (ii)$. This follows from (\ref{les}).

$(ii) \implies (i)$. From (\ref{les}) we get that for all $i \gg 0$ the map
$H^i(\Db^*) \xrightarrow{x} H^i(\Db^*)$ is surjective. The result now follows from Nakayama's Lemma.
\end{proof}
As an easy consequence of \ref{mcm-complex} we get the following:
\begin{corollary}\label{mcm-complex-cor}
(with same hypotheses as in Lemma \ref{mcm-complex}). Let $\bx = x_1,\ldots,x_d$ be a maximal regular sequence in $A$. Set $B = A/(\bx)$, $\K = \Db\otimes_A B$,
Let $\K^*$ be the complex $\Hom_{B}(\K, B)$. Then the following are equivalent:
\begin{enumerate}[\rm(i)]
\item
$H^i(\Db^*) = 0 $ for $i \gg 0$.
\item
$H^i(\K^*) = 0 $ for $i \gg 0$.
\end{enumerate}
\end{corollary}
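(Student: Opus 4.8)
The plan is to obtain Corollary \ref{mcm-complex-cor} by iterating Lemma \ref{mcm-complex} $d$ times, peeling off one element of the regular sequence $\bx$ at a stage. For $0 \le j \le d$ put $A_j = A/(x_1,\ldots,x_j)$ (so $A_0 = A$ and $A_d = B$) and $\Db^{(j)} = \Db \otimes_A A_j$. Since $A$ is Gorenstein and $x_1,\ldots,x_j$ is an $A$-regular sequence, $A_j$ is again a Gorenstein local ring, of dimension $d-j$; moreover $x_{j+1}$ is $A_j$-regular because $\bx$ is an $A$-regular sequence, and $\Db^{(j+1)} = \Db^{(j)} \otimes_{A_j} A_j/(x_{j+1})$.

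Next I verify that $\Db^{(j)}$ satisfies the hypotheses of Lemma \ref{mcm-complex} over the ring $A_j$. Clearly $\Db^{(j)}_n = 0$ for $n \le -1$. For $n \ge 0$ the module $\Db_n$ is maximal \CM \ over $A$, hence $\depth_A \Db_n = d$; since $\bx \subseteq \m$ is a maximal $A$-sequence it is also a $\Db_n$-regular sequence, so $x_1,\ldots,x_j$ is $\Db_n$-regular and $\Db^{(j)}_n = \Db_n/(x_1,\ldots,x_j)\Db_n$ has depth $d-j = \dim A_j$ over $A_j$, i.e.\ $\Db^{(j)}_n$ is maximal \CM \ over $A_j$. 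Now apply Lemma \ref{mcm-complex} with $A$ replaced by $A_j$, $\Db$ by $\Db^{(j)}$, and $x$ by $x_{j+1}$: writing $(\Db^{(j)})^* = \Hom_{A_j}(\Db^{(j)}, A_j)$ it gives
\[
H^i\big((\Db^{(j)})^*\big) = 0 \text{ for } i \gg 0 \iff H^i\big((\Db^{(j+1)})^*\big) = 0 \text{ for } i \gg 0 .
\]

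Chaining these equivalences for $j = 0,1,\ldots,d-1$ yields $H^i(\Db^*) = 0$ for $i \gg 0$ if and only if $H^i\big((\Db^{(d)})^*\big) = 0$ for $i \gg 0$. Finally $\Db^{(d)} = \Db \otimes_A B = \K$ and $(\Db^{(d)})^* = \Hom_B(\K, B) = \K^*$, which is precisely the asserted equivalence. This is a routine reduction and presents no genuine obstacle; the only point needing a little care, handled above, is checking that the maximal Cohen-Macaulay property of each $\Db_n$ is inherited by $\Db^{(j)}_n$ over the (still Gorenstein) ring $A_j$, so that Lemma \ref{mcm-complex} is legitimately applicable at every stage.
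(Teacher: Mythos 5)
Your proof is correct and is exactly the intended argument: the paper gives no proof (it calls the corollary an ``easy consequence'' of Lemma \ref{mcm-complex}), and the evident route is precisely the inductive peeling you carry out. You also supply the two small checks that make the induction legitimate — that each $A_j$ remains Gorenstein local with $x_{j+1}$ regular on it, and that each $\Db^{(j)}_n$ stays maximal Cohen--Macaulay over $A_j$ because the maximal $A$-regular sequence $\bx$ is regular on the maximal Cohen--Macaulay module $\Db_n$ — which is all that is needed.
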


We now give
\begin{proof}[Proof of Theorem \ref{rachel}]
$(i) \iff (ii)$. By \ref{obs-2} we get that $X$ is a maximal \CM \ $A$-module, $\projdim_A Y $ is finite and we have an exact sequence
\[
0 \rt Y \rt X \rt M \rt 0.
\]
As $A$ is Gorenstein we have that $\injdim_A Y$ is finite. It follows that
$\Ext^i_A(L, X) \cong \Ext^i_A(L, M)$ for all $i \geq d+1$. The result follows.

$(ii) \iff (iii)$. For $s \geq 1 $ let $\widetilde{X_s} = \image(\Cb_s \rt \Cb_{s+1})$.  For $s \geq g +1 $ by Lemma \ref{coh} we get that $\widetilde{X_s}$ is maximal \CM.   By Lemma \ref{coh} we also have an exact sequence
\[
0 \rt X \rt \Cb_g \rt \Cb_{g + 1} \rt \cdots \rt \Cb_s \rt \widetilde{X_s} \rt 0.
\]
It follows that if $s \gg 0$ then $\Ext^i_A(L,X) = 0 $ for $i \gg 0$ is equivalent to \\  $\Ext^i_A(L,\widetilde{X_s}) = 0 $ for $i \geq 1$. The result now follows as $\q$ is a Gorenstein ideal; so we have $\widetilde{X_s} = X_{s+1}$ for $s \geq d +1$, see \ref{trunc-cone}.

$(iii) \implies (iv)$. Suppose $\Ext^i_A(L,X_s) = 0 $ for all $i \geq 1 $ and all $s \geq c$. Set $  a = \max\{ g +1, c\}$.
As $H^i(\Pb^*) = \Ext^A_i(N, A) = 0$ for $i \geq g +1 $ we have an exact sequence
\[
0 \rt X_a \rt \Pb^*_{a+1} \rt \Pb^*_{a+2} \rt \cdots   \rt \Pb^*_{n} \rt   \Pb^*_{n+1} \rt \cdots 
\]
As $\Ext^i_A(L,X_s) = 0$ for all $i \geq 1$ and all $s\geq a$ we get that the induced sequence
\[
0 \rt \Hom_A(L, X_a) \rt \Hom_A(L, \Pb^*_{a+1}) \rt \cdots
\]
is exact.
It follows that $H^i(L, \Pb^*_{\geq a + 1}) = 0 $ for $ i \geq a +2$.
Thus by \ref{trunc} we get that $H^i(L, \Pb^*) = 0$ for $ i \geq a + 2$.

$(iv) \implies (iii)$. Suppose $H^i(L, \Pb^*) = 0$ for $ i \geq r$. If $s \geq r$ then $H^i(L, \Pb^*_{\geq s}) = 0$ for all $i \geq s + 1$. Now let $a = \max\{ g +1, r\}$. Let $s \geq a$. As argued before we have an exact sequence
\[
0 \rt X_s \rt \Pb^*_{s+1} \rt \Pb^*_{s+2} \rt \cdots   \rt \Pb^*_{n} \rt   \Pb^*_{n+1} \rt \cdots 
\]
As $H^i(L, \Pb^*_{\geq s}) = 0$ for all $i \geq s + 1$ we get that
the induced sequence
\[
0 \rt \Hom_A(L, X_s) \rt \Hom_A(L, \Pb^*_{s+1}) \rt \cdots
\]
is exact. In particular $\Ext^A_1(L,X_s) = 0$. Notice $\Ext^2_A(L,X_s) \cong \Ext^1_A(L, X_{s+1})$. The latter module is zero by the same argument. Iterating we get $\Ext^i_A(L, X_s) = 0$ for all $i \geq 1$.

$(iv) \iff (v)$. We have an isomorphism of complexes
\[
\Hom_A(L, \Pb^*) \cong \Hom_A(L \otimes_A \Pb, A).
\]
The result follows.

$(v) \iff (vi)$. $\Db = L \otimes_A \Pb$ is a complex of maximal \CM \ $A$-modules since $L$ is maximal \CM \ and $\Pb_n$ is a finitely generated free $A$-module. Also
$\Db_n = 0$ for $n \leq -1$. Set $\K = \Db\otimes_A \ov{A} = \ov{L} \otimes_A \Pb$. The result now follows from Corollary \ref{mcm-complex-cor}.

$(vi) \iff (vii)$. We note that $\ov{A}$ is a zero-dimensional Gorenstein local ring and so is injective as an $\ov{A}$-module. Furthermore $\ov{A}$ is the injective hull of its residue field.
It follows that
\[
H^i(\Hom_{\ov{A}}(\ov{L}\otimes_A \Pb, \ov{A}))  \cong \Hom_{\ov{A}}(H_i(\ov{L}\otimes_A \Pb ), \ov{A}) = \Hom_{\ov{A}}(\Tor^A_i(\ov{L}, N), \ov{A}).
\]
Therefore by \cite[3.2.12]{BH} we get the result.

$(vii) \iff (viii)$. As $L$ is a maximal \CM \ $A$-module we get that $\bx$ is an $L$-regular sequence. Set $L_j = L/(x_1,\ldots,x_j)$ for $j = 1,\ldots,d$. Note $L_d = \ov{L}$.

 We have an exact sequence
$ 0 \rt L \xrightarrow{x_1} L \rt L_1 \rt 0$. This induces a long exact sequence
\[
\cdots \rt \Tor^A_i(L,N)  \xrightarrow{x_1} \Tor^A_i(L,N) \rt \Tor^A_i(L_1,N) \rt \Tor^A_{i-1}(L,N) \rt \cdots
\]
Clearly if $\Tor^A_i(L,N) = 0$ for $i \gg 0$ then $\Tor^A_i(L_1,N) = 0$ for all $i \gg 0$. Conversely if $\Tor^A_i(L_1,N) = 0$ for all $i \gg 0$ then for the map
$\Tor^A_i(L,N)  \xrightarrow{x_1} \Tor^A_i(L,N)$ is surjective for all $i \gg 0$. By Nakayama's Lemma we get $\Tor^A_i(L,N) = 0$ for $i \gg 0$.

We also have an exact sequence $ 0 \rt L_1 \xrightarrow{x_2} L_1 \rt L_2 \rt 0$. 
A similar argument gives that $\Tor^A_i(L_1,N) = 0$ for $i \gg 0$ if and only if  $\Tor^A_i(L_2,N) = 0$ for all $i \gg 0$. Combining with the previous result we get that $\Tor^A_i(L,N) = 0$ for $i \gg 0$ if and only if  $\Tor^A_i(L_2,N) = 0$ for all $i \gg 0$.

Iterating this argument yields the result.
\end{proof}

We now give
\begin{proof}[Proof of Theorem \ref{third}(1)]
Let $0 \rt W \rt E \rt L \rt 0$ be a maximal \CM \ approximation of $L$. As $\projdim_A W$ is finite we get that $\Ext^i_A(L,M) \cong \Ext^i_A(E,M)$ for $i \geq d + 2$ and $\Tor^A_i(L, N) \cong \Tor^A_i(E, N)$ for $i \geq d + 2$. The result now follows from Theorem \ref{rachel}.
\end{proof}

\section{Proof of Theorem \ref{third}(2) and Corollary \ref{third-cor}}
In this section we prove Theorem \ref{third}(2). We need a few preliminary facts to prove this result.

\s Let $(A,\m)$ be a Noetherian local ring and let $E$ be the injective hull of its residue field. If $G$ is an $A$-module then set $G^\vee = \Hom_A(G, E)$. Let $\ell(G)$ denote the length of $G$. The following result is known. We give a proof as we are unable to find a reference. 

\begin{lemma}\label{ext-tor}
Let $(A,\m)$ be a Noetherian local ring and let $E$ be the injective hull of its residue field. Let $M$ be  finitely generated $A$-module. Let $L$ be an $A$-module with  $\ell(L) < \infty$. Then for all $i \geq 0$ we have an isomorphism
\[
\Ext^i_A(M, L) \cong (\Tor^A_i(M,L^\vee))^\vee.
\]
\end{lemma}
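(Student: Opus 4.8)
The statement to prove is Lemma~\ref{ext-tor}: for a finitely generated module $M$ and a finite-length module $L$ over a Noetherian local ring $(A,\m)$ with $E$ the injective hull of the residue field, $\Ext^i_A(M,L) \cong (\Tor^A_i(M,L^\vee))^\vee$ for all $i\geq 0$.

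The plan is to use Matlis duality together with a free resolution of $M$. First I would take a free resolution $\F : \cdots \rt F_1 \rt F_0 \rt M \rt 0$ of $M$ by finitely generated free $A$-modules; this is possible since $M$ is finitely generated and $A$ is Noetherian. Then $\Ext^i_A(M,L) = H^i(\Hom_A(\F, L))$ and $\Tor^A_i(M, L^\vee) = H_i(\F \otimes_A L^\vee)$. The key point is the natural isomorphism of complexes
\[
\Hom_A(\F, L) \cong \Hom_A(\F, \Hom_A(L^\vee, E)) \cong \Hom_A(\F \otimes_A L^\vee, E),
\]
where the middle isomorphism uses that $L$ is finite length, hence reflexive for Matlis duality, i.e.\ $L \cong (L^\vee)^\vee = \Hom_A(L^\vee, E)$, and the last is Hom-tensor adjunction (adjointness applied levelwise, with each $F_i$ finitely generated free so everything is compatible with the differentials). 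Since $E$ is injective, $\Hom_A(-, E)$ is exact, so it commutes with taking homology: $H^i(\Hom_A(\F \otimes_A L^\vee, E)) \cong \Hom_A(H_i(\F\otimes_A L^\vee), E) = (\Tor^A_i(M, L^\vee))^\vee$. Combining these isomorphisms gives the result.

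I should be a little careful about two routine points. One is that each of the isomorphisms above must be checked to be compatible with the differentials of the complexes, so that it really induces an isomorphism on (co)homology; this is standard naturality of adjunction and of $\Hom_A(-, E)$ applied degreewise. The other is the reflexivity $L \cong (L^\vee)^\vee$, which holds because $L$ has finite length (Matlis duality gives an exact contravariant equivalence on the category of finite-length modules, with $E$ itself the dualizing object; this is \cite[3.2.12]{BH}, already invoked elsewhere in the paper).

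The main obstacle, such as it is, is purely bookkeeping: ensuring the chain of natural isomorphisms is genuinely a chain-complex isomorphism rather than just a degreewise one, and invoking finite length of $L$ at exactly the spot where reflexivity is needed (the step $L \cong (L^\vee)^\vee$). There is no serious mathematical difficulty; the lemma is essentially the Matlis-dual incarnation of the adjunction between $\Hom$ and $\otimes$, and the finiteness hypotheses on $M$ (finitely generated, so it has a free resolution by finite free modules) and on $L$ (finite length, so Matlis-reflexive) are exactly what make the formal manipulation valid.
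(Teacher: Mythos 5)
Your proof is correct and follows essentially the same route as the paper's: take a free resolution of $M$, use Matlis reflexivity $L \cong (L^\vee)^\vee$ together with Hom-tensor adjunction to get the isomorphism of complexes $\Hom_A(\F,L) \cong \Hom_A(\F \otimes_A L^\vee, E)$, and then invoke injectivity of $E$ to pass $\Hom_A(-,E)$ through homology. The only cosmetic difference is that the paper starts from the minimal resolution of $M$, but that plays no role in the argument.
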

\begin{proof}
We note that $(L^\vee)^\vee \cong L$, see \cite[3.2.12]{BH}.
Let $\Pb$ be a minimal projective resolution of $M$.
We have the following isomorphism of complexes:
\[
\Hom_A(\Pb\otimes_A L^\vee, E) \cong \Hom_A(\Pb, \Hom_A(L^\vee,  E)) \cong \Hom_A(\Pb,L).
\]
We have $H^i(\Hom_A(\Pb,L)) = \Ext^A_i(M, L)$.
Notice
as $E$ is an injective $A$-module we have
\begin{align*}
H^i(\Hom_A(\Pb\otimes_A L^\vee, E)) &\cong \Hom_A(H_i(\Pb\otimes_A L^\vee), E) \\
&\cong \Hom_A( \Tor^A_i(M, L^\vee), E)\\
&= (\Tor^A_i(M,L^\vee))^\vee.
\end{align*}
\end{proof}

The following result is also known. We give a proof as we are unable to find a reference.

\begin{lemma}\label{mod-x-dual}
Let $(A,\m)$ be a Gorenstein local ring of dimension $d$ and let $E$ be the injective hull of its residue field.  Let $D \in \CMr(A)$ and let $\bx = x_1,\ldots,x_{c} \in \m$ be a  $D$-regular sequence (note $c \leq d-r$). Then
\begin{enumerate}[\rm (1)]
\item
$\bx$ is a $D^\dagger$-regular sequence.
\item
$(D/\bx D)^\dagger \cong D^\dagger/\bx D^\dagger$.
\item
If $r = d$ (and so $\ell(D) < \infty$) then
\[
D^\dagger \cong D^\vee ( = \Hom_A(D, E)).
\]
\item
If $T$ is another finitely generated $D$-module then
\begin{enumerate}[\rm (a)]
\item
$\Ext^i_A(T, D) = 0$ for $i \gg 0$ if and only if $\Ext^i_A(T, D/\bx D) = 0$ for $i \gg 0$.
\item
$\Ext^i_A(D,T) = 0$ for $i \gg 0$ if and only if $\Ext^i_A( D/\bx D , T) = 0$ for $i \gg 0$.
\end{enumerate}
\end{enumerate}
\end{lemma}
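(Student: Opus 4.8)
The plan is to handle the four items essentially in order, reducing everything to the single-element case $c=1$ by induction, and then using the basic duality facts from \ref{dual}. First I would prove (1) and (2) simultaneously. It suffices to treat $c = 1$, say $\bx = x$; the general case follows by replacing $A$ by $A/(x_1)$ and $D$ by $D/x_1 D$ and iterating (note $\height$ decreases appropriately and $D/x_1D \in \CMr(A/(x_1))$, or one simply works over $A$ throughout). Since $x$ is $D$-regular and $D \in \CMr(A)$, we have $D/xD \in \CM^{r+1}(A)$. Apply $\Hom_A(-,A)$ to the exact sequence $0 \to D \xrightarrow{x} D \to D/xD \to 0$. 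Using $\Ext^i_A(D,A) = 0$ for $i \neq r$ and $\Ext^i_A(D/xD, A) = 0$ for $i \neq r+1$ from \ref{dual}, the long exact sequence collapses to
\[
0 \to D^\dagger \xrightarrow{x} D^\dagger \to \Ext^{r+1}_A(D/xD,A) \to 0,
\]
which immediately gives that $x$ is $D^\dagger$-regular, proving (1), and simultaneously identifies $(D/xD)^\dagger = \Ext^{r+1}_A(D/xD, A) \cong D^\dagger/xD^\dagger$, proving (2).

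For (3), when $r = d$ the module $D$ has finite length, so $\bx$ is empty and $D^\dagger = \Ext^d_A(D,A)$. Here I would invoke local duality: since $A$ is Gorenstein of dimension $d$, the local cohomology functor $H^d_\m(-)$ applied to a finite-length module is just the identity up to Matlis dual, and $\Ext^d_A(D,A) \cong \Hom_A(D, E)$ by \cite[3.2.12]{BH} (or directly: resolve $A$ by a dualizing complex, which for a Gorenstein ring is $A[d]$, so $R\Hom_A(D,A)[d] \simeq R\Hom_A(D,E) = D^\vee$ concentrated in degree $0$ since $D$ has finite length). This gives $D^\dagger \cong D^\vee$.

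For (4), again reduce to $c=1$. Part (a): from $0 \to D \xrightarrow{x} D \to D/xD \to 0$ we get the long exact sequence
\[
\cdots \to \Ext^i_A(T,D) \xrightarrow{x} \Ext^i_A(T,D) \to \Ext^i_A(T, D/xD) \to \Ext^{i+1}_A(T,D) \to \cdots
\]
If $\Ext^i_A(T,D) = 0$ for $i \gg 0$ then clearly $\Ext^i_A(T, D/xD) = 0$ for $i \gg 0$; conversely, if $\Ext^i_A(T,D/xD) = 0$ for $i \gg 0$ then $\Ext^i_A(T,D) \xrightarrow{x} \Ext^i_A(T,D)$ is surjective for $i \gg 0$, so Nakayama's Lemma (each $\Ext^i_A(T,D)$ is finitely generated since $T$ is finitely generated and $A$ is Noetherian) forces $\Ext^i_A(T,D) = 0$ for $i \gg 0$. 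Part (b) is the same argument reading the long exact sequence in the first variable: $0 \to D \xrightarrow{x} D \to D/xD \to 0$ induces
\[
\cdots \to \Ext^{i-1}_A(D,T) \xrightarrow{x} \Ext^{i-1}_A(D,T) \to \Ext^i_A(D/xD, T) \to \Ext^i_A(D,T) \xrightarrow{x} \Ext^i_A(D,T) \to \cdots,
\]
and the same surjectivity-plus-Nakayama argument applies. Iterating over $x_1, \ldots, x_c$ completes the proof. I do not expect any genuine obstacle here; the only mild subtlety is making sure in (1)--(2) that the Ext-vanishing ranges for $D$ and $D/xD$ really are disjoint as indicated by \ref{dual}, which is what makes the long exact sequence split into short pieces.
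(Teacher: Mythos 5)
Your proofs of (1), (2), and (4) coincide essentially verbatim with the paper's: reduce to $c=1$, apply $\Hom_A(-,A)$ to $0 \to D \xrightarrow{x} D \to D/xD \to 0$ and use the Ext-vanishing ranges from \ref{dual} to collapse the long exact sequence to a short one (giving (1) and (2) at once), and for (4) run the long exact Ext sequences and finish with Nakayama; iterating over $x_1,\ldots,x_c$ in each case. The only substantive difference is in (3). The paper argues elementarily: choose a maximal $A$-regular sequence $\by \subset \ann_A D$, set $B = A/(\by)$ (an Artin Gorenstein local ring), identify $D^\dagger \cong \Hom_B(D,B)$ via Lemma \ref{qgor-mod}, then use tensor-hom adjunction to write $D^\vee \cong \Hom_B(D, \Hom_A(B,E))$, and finish by showing $\Hom_A(B,E) \cong B$ (it is an injective $B$-module of the same length as $B$). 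You instead invoke local duality, or equivalently the observation that the dualizing complex of a Gorenstein ring is $A$ shifted by $d$, to get $D^\dagger \cong D^\vee$ in one stroke. Both are correct; the paper's route is more self-contained and relies only on basic facts about Gorenstein rings already cited elsewhere, whereas yours is quicker if local duality is available. One minor caveat: your citation of \cite[3.2.12]{BH} for $\Ext^d_A(D,A)\cong\Hom_A(D,E)$ is off --- that result is Matlis duality (preservation of length, double duality); the statement you want is local duality, \cite[3.5.8]{BH} or thereabouts --- but since you sketch the dualizing-complex derivation as an alternative, the argument stands.
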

\begin{proof}
(1) and (2) :
Let $x$ be $D$-regular. Then notice $D/xD$ is a \CM \ $A$-module of codimension $r +1$. We have a short-exact sequence $0 \rt D \xrightarrow{x} D \rt D/xD \rt 0$. Applying the functor $\Hom_A(-, A)$ yields a long exact sequence which after applying \ref{dual} reduces to a short-exact sequence
\[
0 \rt \Ext^r_A(D, A) \xrightarrow{x} \Ext^r_A(D, A) \rt \Ext^{r+1}_A(D/xD, A) \rt 0.
\]
It follows that $x$ is $D^\dagger$-regular and $D^\dagger/x D^\dagger \cong (D/xD)^\dagger$. 

Iterating this argument yields (1) and (2).

(3): Let $\by = y_1,\ldots, y_d \subset \ann_A D$ be a maximal $A$-regular sequence.
Set $B = A/(\by)$.
Then 
\[
D^\dagger = \Ext^n_A(D, A) \cong \Hom_B(D,B).
\]
We have
\[
D^\vee = \Hom_A(D, E) = \Hom_A(D\otimes_A B , E) \cong \Hom_B(M, \Hom_A(B, E)).
\]
Now $\Hom_A(B, E) $ is an injective $B$-module of finite length. As $B$ is an Artin Gorenstein local ring we get that  $\Hom_A(B, E)$ is free as a $B$-module.   As
$\ell(\Hom_A(B, E)) = \ell(B)$ (see \cite[3.2.12]{BH}) it follows that  $\Hom_A(B, E) = B$. The result follows. 

(4): For $i = 1,\ldots,c$ set $D_i = D/(x_1,\ldots,x_i)D$.

4(a): The exact sequence $0 \rt D \xrightarrow{x_1} D \rt D_1 \rt 0$ induces a long exact sequence 
\begin{equation}\label{md}
\cdots \Ext^i_A(T, D) \xrightarrow{x_1} \Ext^i_A(T, D) \rt \Ext^i_A(T, D_1) \rt \Ext^{i+1}_A(T, D) \rt \cdots
\end{equation}
If $\Ext^i(T, D) = 0$ for all $i \gg 0$ then by (\ref{md}) we get $\Ext^i_A(T, D_1) = 0$ for all $i \gg 0$. Conversely if $\Ext^i_A(T, D_1) = 0$ for all $i \gg 0$ then
by (\ref{md}) we get that the map $\Ext^i_A(T, D) \xrightarrow{x_1} \Ext^i_A(T, D)$ is surjective for all $i \gg 0$. So by Nakayama's Lemma $\Ext^i(T, D) = 0$ for all $i \gg 0$.

We also have an   exact sequence $0 \rt D_1 \xrightarrow{x_2} D_1 \rt D_2 \rt 0$.
A similar argument to the above yields that  $\Ext^i(T, D_1) = 0$ for all $i \gg 0$ 
if and only if $\Ext^i(T, D_2) = 0$ for all $i \gg 0$. Combining this with the previous result we get $\Ext^i(T, D) = 0$ for all $i \gg 0$ 
if and only if $\Ext^i(T, D_2) = 0$ for all $i \gg 0$. 

Iterating this argument yields the result.

4(b): This is similar to 4(a).
\end{proof}
We now give
\begin{proof}[Proof of Theorem \ref{third}(2)]
Let $\bx = x_1,\ldots,x_{d-r}$ be a maximal $D$-regular sequence. Then by \ref{mod-x-dual}, $\bx$ is also a $D^\dagger$ regular sequence. Also by \ref{mod-x-dual}, we get $D^\dagger/\bx D^\dagger = (D/\bx D)^\dagger$. Let $E$ be the injective hull of the residue field of $A$. 

We have
\begin{align*}
&\Ext^i_A(M, D)  = 0 \ \text{for all $i \gg 0$} \\
&\iff \Ext^i_A(M, D/\bx D)  = 0 \ \text{for all $i \gg 0$}; \text{see \ref{mod-x-dual}}, \\
&\iff \Tor^A_i(M, (D/\bx D)^\vee) = 0 \ \text{for all $i \gg 0$}; \text{see \ref{ext-tor} and \cite[3.2.12]{BH}},\\
 &\iff \Tor^A_i(M,D^\dagger/\bx D^\dagger) = 0 \ \text{for all $i \gg 0$}; \text{see \ref{mod-x-dual}}, \\
&\iff  \Ext^A_i(D^\dagger/\bx D^\dagger, N) = 0 \ \text{for all $i \gg 0$}; \text{see Theorem \ref{third}(1)}, \\
&\iff  \Ext^A_i(D^\dagger, N) = 0 \ \text{for all $i \gg 0$}; \text{see \ref{mod-x-dual}}.
\end{align*}
\end{proof}

We now give 
\begin{proof}[Proof of Corollary \ref{third-cor}]
By Theorem \ref{third}(2) we get that 
\[
\Ext^i_A(M, C)  = 0 \  \text{for all $i \gg 0$} \iff \Ext^i_A(C^\dagger, N) = 0 \  \text{for all $i \gg 0$}.
\]
We note that $C^\dagger = \Ext^r_A(C,A) \cong \Hom_{A/\n}(C, A/\n)$, see \ref{qgor-mod}.
As $C \sim_\n D$ we have an exact sequence $ 0 \rt C^\dagger \rt G \rt D \rt 0$, where $G$ is a finitely generated free $A/\n$-module. As $\n$ is a Gorenstein ideal in $A$ we get $\projdim_A G$ is finite. The result follows.
\end{proof}
\section{Some preliminaries to prove Theorem \ref{second}}
In this section we discuss a few preliminaries which will enable us to prove Theorem  \ref{second}. More precisely we need the notion of cohomological operators over a complete intersection ring; see \cite{Gull} and
\cite{Eis}.

\s 
 Let $\mathbf{f} = f_1,\ldots,f_c$ be a regular sequence in a  local Noetherian ring $(Q,\n)$. We assume $ \mathbf{f} \subseteq \n^2$.  Set $I = (\mathbf{f})$ and
 $ A = Q/I$.
\s

The \emph{Eisenbud operators}, \cite{Eis}  are constructed as follows: \\
Let $\mathbb{F} \colon \cdots \rightarrow F_{i+2} \xrightarrow{\partial} F_{i+1} \xrightarrow{\partial} F_i \rightarrow \cdots$ be a complex of free
$A$-modules.

\emph{Step 1:} Choose a sequence of free $Q$-modules $\wt{F}_i$ and maps $\wt{\partial}$ between them:
\[
\wt{\mathbb{F}} \colon \cdots \rightarrow \wt{F}_{i+2} \xrightarrow{\wt{\partial}} \wt{F}_{i+1} \xrightarrow{\wt{\partial}} \wt{F}_i \rightarrow \cdots
\]
so that $\mathbb{F} = A\otimes\wt{\mathbb{F}}$

\emph{Step 2:} Since $\wt{\partial}^2 \equiv 0 \ \text{modulo} \ (\mathbf{f})$, we may write  $\wt{\partial}^2  = \sum_{j= 1}^{c} f_j\wt{t}_j$ where
$\wt{t_j} \colon \wt{F}_i \rightarrow \wt{F}_{i-2}$ are linear maps for every $i$.

 \emph{Step 3:}
Define, for $j = 1,\ldots,c$ the map $t_j = t_j(Q, \mathbf{f},\mathbb{F}) \colon \mathbb{F} \rightarrow \mathbb{F}(-2)$ by $t_j = A\otimes\wt{t}_j$.

\s
The operators $t_1,\ldots,t_c$ are called Eisenbud's operator's (associated to $\mathbf{f}$) .  It can be shown that
\begin{enumerate}
\item
$t_i$ are uniquely determined up to homotopy.
\item
$t_i, t_j$ commute up to homotopy.
\end{enumerate}

\s Let $R = A[t_1,\ldots,t_c]$ be a polynomial ring over $A$ with variables $t_1,\ldots,t_c$ of degree $2$. Let $M, N$ be  finitely generated $A$-modules. By considering a free resolution $\mathbb{F}$ of $M$ we get well defined maps
\[
t_j \colon \Ext^{n}_{A}(M,N) \rightarrow \Ext^{n+2}_{R}(M,N) \quad \ \text{for} \ 1 \leq j \leq c  \ \text{and all} \  n,
\]
which turn $\Ext_A^*(M,N) = \bigoplus_{i \geq 0} \Ext^i_A(M,N)$ into a module over $R$. Furthermore these structure depend  on $\mathbf{f}$, are natural in both module arguments and commute with the connecting maps induced by short exact sequences.

\s  Gulliksen, \cite[3.1]{Gull},  proved that if $\projdim_Q M$ is finite then
$\Ext_A^*(M,N) $ is a finitely generated $R$-module. For $N = k$, the residue field of $A$, Avramov in \cite[3.10]{LLAV} proved a converse; i.e., if
$\Ext_A^*(M,k)$ is a finitely generated $R$-module then $\projdim_Q M$ is finite.

\s We need to recall the notion of complexity of a module.
 This notion was introduced by Avramov in \cite{LLAV}.
 Let $\beta_i^A(M) = \ell( \Tor^A_i(M,k) )$ be the $i^{th}$ Betti number of $M$ over $A$. The complexity of $M$ over $A$ is defined by
\[
\cx_A M = \inf\left \lbrace b \in \mathbb{N}  \left\vert \right.   \varlimsup_{n \to \infty} \frac{\beta^A_n(M)}{n^{b-1}}  < \infty \right \rbrace.
\]
If $A$ is a local complete intersection of $\codim c$ then $\cx_A M \leq c$. Furthermore all values between $0$ and $c$ occur.

\s \label{cdim} Since $\m \subseteq \ann \Ext^{i}_A(M,k)$ for all $i \geq 0$ we get that $\Ext^*_A(M,k)$ is a module over $S = R/\m R = k[t_1,\ldots,t_c]$.
If $\projdim_Q M$ is finite then $\Ext^*_A(M,k)$ is a finitely generated $S$-module of Krull dimension $\cx M$.

\s \label{cx-dual} If $(Q,\n)$ is regular then by \cite[Theorem I(3)]{ABu} we get that $\dim_S \Ext^*_A(M,k) = \dim_S  \Ext^*_A(k,M)$. In particular if $M$ is maximal \CM \ $A$-module then $\cx M = \cx M^*$. Using this fact it is not difficult to show that if $M$ is a \CM \ $A$-module then $\cx M = \cx M^{\dagger}$. 

\s \label{lucho} Let $Q$ is regular local with infinite residue field and let $M$ be a finitely generated $A$-module with $\cx(M) = r$. The surjection
$Q \rt A$ factors as $Q \rt  R  \rt A$, with the kernels of both maps generated by regular
sequences, $ \projdim_R M < \infty$ and $\cx_A M = \projdim_R A$ (see \cite[3.9]{LLAV}).

We need the following:
\begin{proposition}\label{comp-cx}
Let $Q = k[x,y,z]_{(x,y,z)}$ where $k$ is an infinite field. Let $\m$ be the maximal  
 ideal of $Q$. Let $\q$ be an $\m$-primary Gorenstein ideal such that $\q$ is not a 
 complete intersection. Suppose $\q \supseteq (u,v)$ where $u,v \in \m^2$ is an $Q$-regular sequence. Set $A = Q/(u,v)$ and $\ov{\q} = \q/(u,v)$. Then $\cx A/\ov{\q} = 2$.
\end{proposition}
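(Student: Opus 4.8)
The plan is to play the universal upper bound $\cx_A(A/\ov{\q})\le 2$, which holds because $A=Q/(u,v)$ is a complete intersection of codimension $2$, against the factorization result \ref{lucho}, and to show that $\cx_A(A/\ov{\q})\le 1$ would force $\q$ to be a complete intersection. Write $B=Q/\q=A/\ov{\q}$; since $\q$ is $\m$-primary, $B$ is an Artinian Gorenstein ring, finite over $A$. Suppose $\cx_A B=r\le 1$. Since $Q$ is regular local with infinite residue field, \ref{lucho} applied to the $A$-module $B$ produces a tower $Q\twoheadrightarrow R\twoheadrightarrow A$ in which both kernels are generated by regular sequences, with $\projdim_R B<\infty$ and $r=\cx_A B=\projdim_R A$. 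Since $\ker(Q\to A)=(u,v)$ is generated by the regular sequence $u,v$ of length $2$, this forces $R=A$ when $r=0$, and $R=Q/(w)$ for some $Q$-regular $w\in(u,v)$ when $r=1$; in either case $B=R/I$ for an $\m_R$-primary ideal $I$ (namely $I=\ov{\q}$, resp.\ $I=\q/(w)$).

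In the case $r=0$ we have $\projdim_A B<\infty$, so by Auslander--Buchsbaum (recall $A$ has depth $1$ and $B$ is Artinian) $\projdim_A B=1$; as $B$ is cyclic, its minimal free resolution is $0\to\ov{\q}\to A\to B\to 0$, so $\ov{\q}$ is a free $A$-module. A nonzero free module that embeds as an ideal of $A$ must have rank $1$ (localize at a minimal prime of the one-dimensional ring $A$), so $\ov{\q}$ is principal, say $\ov{\q}=(\bar f)$. Then $\q=(u,v,f)$ is an $\m$-primary ideal generated by three elements in the three-dimensional regular local ring $Q$, hence a complete intersection — contradicting the hypothesis on $\q$.

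In the case $r=1$, $R=Q/(w)$ is a two-dimensional Gorenstein local ring, $B=R/I$ with $\projdim_R B<\infty$, so Auslander--Buchsbaum gives $\projdim_R B=2$. Since $I$ is $\m_R$-primary it has grade $2$, so $I$ is a perfect ideal of grade $2$ with $R/I=B$ Gorenstein; that is, $I$ is a Gorenstein ideal of codimension $2$, hence a complete intersection (by the classical structure theorem, or: self-duality of the length-two minimal resolution of $B$ forces the Hilbert--Burch matrix of $I$ to be $2\times1$, so $I$ is $2$-generated, and a grade-$2$ ideal with two generators is generated by a regular sequence). Lifting the two generators of $I$ to $Q$ again exhibits $\q$ as three-generated, hence as a complete intersection — contradiction. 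Therefore $\cx_A(A/\ov{\q})=2$.

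The step I expect to require the most care is the passage through \ref{lucho}: one must correctly identify which intermediate complete intersection it produces — either $A$ itself (when $r=0$) or a single hypersurface section $R=Q/(w)$ with $w$ a generic combination of $u$ and $v$ (when $r=1$) — and verify that $B=Q/\q$ is genuinely an $R$-module of finite projective dimension in each case. Once that is in place, the rest is routine depth and grade bookkeeping together with the classical structure theory of perfect (Gorenstein) ideals of codimension at most $2$.
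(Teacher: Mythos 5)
Your proof is correct and follows essentially the same route as the paper's own argument: the upper bound $\cx_A(A/\ov{\q})\le 2$ is automatic since $A$ is a codimension-$2$ complete intersection, the case $\cx=0$ is ruled out by Auslander--Buchsbaum applied over $A$ (giving $\ov{\q}$ free of rank one, hence principal, hence $\q$ a three-generated $\m$-primary ideal in a regular local ring of dimension three), and the case $\cx=1$ is ruled out by invoking the factorization \ref{lucho} to produce a hypersurface $R=Q/(w)$ over which $A/\ov{\q}$ has a length-two free resolution whose self-duality forces $\q$ to be three-generated again. The only cosmetic difference is that you phrase the grade-$2$ step via Hilbert--Burch/self-duality while the paper simply reads off $b=1$ and $a=2$ from the Gorenstein property of $A/\ov{\q}$ and the Euler characteristic; both routes rest on the same classical structure theorem.
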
 
\begin{proof}
As $\codim A = 2$ we have that $\cx A/ \ov{\q} = 0 , 1$ or $2$. We prove $\cx A/ \ov{\q} \neq 0 , 1$.

If $\cx A/ \ov{\q} = 0$ then $A/\ov{\q}$ has finite projective dimension over $A$. So by Auslander-Buchsbaum formula $\projdim_A A/\ov{\q} = 1$. Therefore $\ov{\q}$ is a principal ideal. It follows that $\q$ is a complete intersection, a contradiction.

 If $\cx A/ \ov{\q} = 1$ then by \ref{lucho}, the surjection
$Q \rt A$ factors as $Q \rt  R  \rt A$, with the kernels of both maps generated by regular
sequences, $ \projdim_R A/\ov{\q} < \infty$ and $\cx_A A/\ov{\q} = \projdim_R A = 1$. Thus $\dim R = 2$.  So $R = Q/(h)$ for some $h$.

As $A/\ov{\q}$ has finite length, by Auslander-Buchsbaum formula $\projdim_R A/\ov{\q} = 2$. Consider the minimal resolution of  $A/\ov{\q}$ over $R$:
\[
0 \rt R^b \rt R^a \rt R \rt  A/\ov{\q} \rt 0.
\]
As $A/\ov{\q}$ is a Gorenstein ring we have $b = 1$ and so $a = 2$. 
Thus there exists $\alpha, \beta \in R$ with $A/\ov{\q} = R/(\alpha, \beta) = Q/(h,\alpha, \beta)$. It follows that $\q$ is a complete intersection ideal, a contradiction. 
\end{proof}
\section{Proof of Theorem \ref{second}}
In this section we prove Theorem \ref{second}. We first make the following:

\s\label{second-const}  \textit{Construction:} Let $Q = \mathbb{Q}[x,y,z]_{(x,y,z)}$  and let $\m$ be its maximal ideal. We construct a $\m$-primary Gorenstein ideal $\q$ in $Q$ such that
\begin{enumerate}
\item
$\q$ is not a complete intersection.
\item
There  exists a $Q$-regular sequence $f,u,v \in \m^2$ such that
$$ (f^a, v^b, u) \subseteq \q \subseteq (f,u,v) \quad \text{for some} \ a,b \geq 2. $$
\end{enumerate}
Set $u = x^2 + y^2 + z^2$, $R = Q/(u), \n = \m/(u)$. Then note $(x,y)$ is a reduction of $\n$ and $\n^2 = (x,y)\n$. It follows that $x^7,y^7$ is a regular sequence in $R$. So $I = (x^7, y^7) \colon (xy + yz + xz)$ is a Gorenstein ideal. Using Singular, \cite{DGPS},  it can be shown that $I$ has $12$ minimal generators and $I \subseteq \n^6$. In particular $I$ is not a complete intersection in $R$.
Also note that $\n^6 \subseteq (x, y)^5 \subseteq (x^2, y^2)$. Let $\q$ be an ideal in $Q$ containing $u$ such that $\q/(u) = I$. Then $\q$ has $12$ or $13$ minimal generators. So $\q$ is not a complete intersection. Also clearly $\q$ is $\m$-primary. Note
\[
((x^2)^4, (y^2)^4, u) \subseteq (x^7, y^7, u) \subseteq \q \subseteq (x^2, y^2, u).
\]
Set $f = x^2, v = y^2$ and $a = b = 4$.

We now give 
\begin{proof}[Proof of Theorem \ref{second}]
Let $Q = \mathbb{Q}[x,y,z]_{(x,y,z)}$  and let $\m$ be its maximal ideal.
We make the construction as in \ref{second-const}.
Let $E$ be a non-free stable maximal \CM \ $Q/(f)$-module. Let 
\[
0 \rt Q^r \rt Q^r \rt E \rt 0
\]
be a minimal resolution of $E$ as a $Q$-module. Note $u$ is $Q/(f)$-regular and so 
$E$-regular. Thus we have an exact sequence
\[
0 \rt (Q/(u))^r \rt (Q/(u))^r \rt E/uE \rt 0.
\]

Set $A = Q/(u,f^a)$ and $\ov{\q} = \q/(u, f^a)$. Then note that $\ov{\q}$ is a quasi-Gorenstein ideal in $A$ which is not a complete intersection. Furthermore notice that $E/uE$ is a maximal \CM \  $Q/(f,u)$-module and so a maximal \CM \ $A$-module.
Furthermore notice that $\cx E/uE = 1$ as an $A$-module.
 
We now note that $v$ is $A$-regular and so $E/uE$-regular. Set $M = E/(u,v)E$. Thus $M$ is an $A$-module of finite length. Also $\cx_A M = 1$. Furthermore 
\[
\q \subseteq (f,u,v) \subseteq \ann_Q M.
\]
So we have $\ov{\q} \subseteq \ann_A M$.  Thus we have finite length module $M$ of complexity one and a quasi-Gorenstein ideal $\ov{\q}$ of complexity two. Set $B = A/ \ov{\q}$. Clearly $M$ does not have $B$ as a direct summand. Set $N = \Omega_B(\Tr_B(M))$. Then $M$ is horizontally linked to $N$ as $B$-modules, see \cite[Proposition 8, p.\ 596]{MS}. So $M \sim_{\ov{\q}} N$ as $A$-modules. 
If $t \in \ann_A M$ is a regular element then it is not difficult to show that there exists $i \geq 1$ such that the $C= A/(t^i)$-module $M$ has no free summands as a $C$-module. Let $M \sim_{t^i} L$ as $A$-modules.

By Lemma \ref{gor-cx} we have: $\cx_A N = 2$ and $\cx_A L = 1$.

This finishes the proof as for any module $P$ the complexity of a maximal \CM \ approximation of $P$ is equal to complexity of $P$. We note that $N$ is evenly linked to $L$ and $\cx X(N) = \cx N = 2$ while $\cx X(L) = \cx L = 1$. It follows that $X(N)$ is not stably isomorphic to $X(L)$.

\end{proof}
We now state and prove the  Lemma we need to finish the proof of Theorem \ref{second}.

\begin{lemma}\label{gor-cx}
Let $(Q,\n)$ be a regular local ring and let $\bff = f_1,\ldots, f_c \in \n^2$ be a regular sequence. Set $A = Q/(\bff)$. 
Let $M \in \CMg(A)$.  Also let  $M \sim_\q N$ where $\q$ is a quasi-Gorenstein ideal in $A$. Then
\begin{enumerate}[\rm (1)]
\item
If $\q$ is a Gorenstein ideal then $\cx M = \cx N$.
\item
If $\projdim A/\q = \infty$ and $\cx A/\q > \cx M$ then
$ \cx N = \cx A/\q$. 
\end{enumerate}
\end{lemma}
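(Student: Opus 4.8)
The plan is to reduce both parts of the lemma to a single short exact sequence produced by the construction of Section~3, and then to do some complexity bookkeeping. Write $B = A/\q$; since $\q$ is quasi-Gorenstein, $B$ is a Gorenstein local ring. Put $b = \mu_A(M) \geq 1$. First I would apply the construction of \ref{const} with $\Pb$ a minimal free resolution of $M$: exactly as in the proof of Lemma \ref{coh}, dualizing the surjection $\Pb_0/\q\Pb_0 \rt M$ over $B$ gives an exact sequence of $B$-modules $0 \rt \Hom_B(M,B) \rt \Hom_B(\Pb_0/\q\Pb_0, B) \rt N \rt 0$. Since $\Pb_0/\q\Pb_0 \cong B^{b}$ and, by \ref{qgor-mod}, $\Hom_B(M,B) \cong \Ext^g_A(M,A) = M^\dagger$, this becomes a short exact sequence of $A$-modules
\[
(\ast) \qquad 0 \rt M^\dagger \rt B^{b} \rt N \rt 0 .
\]
Everything after this uses only $(\ast)$ together with the equality $\cx_A M^\dagger = \cx_A M$, which is \ref{cx-dual}.

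For part (1), suppose $\q$ is a Gorenstein ideal. Then $\q$ is perfect, so $p := \projdim_A B < \infty$. Tensoring $(\ast)$ with $k$ and using $\Tor^A_i(B^{b},k) = 0$ for $i > p$, the long exact sequence collapses to isomorphisms $\Tor^A_i(N,k) \cong \Tor^A_{i-1}(M^\dagger,k)$ for all $i > p+1$; hence $\beta^A_i(N) = \beta^A_{i-1}(M^\dagger)$ for $i \gg 0$, and therefore $\cx_A N = \cx_A M^\dagger = \cx_A M$.

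For part (2), suppose $\projdim_A B = \infty$ and $\cx_A(A/\q) > \cx_A M$. From $(\ast)$ one reads off $\beta^A_i(N) \leq \beta^A_i(B^{b}) + \beta^A_{i-1}(M^\dagger) = b\,\beta^A_i(A/\q) + \beta^A_{i-1}(M^\dagger)$, and since $\cx_A M^\dagger = \cx_A M < \cx_A(A/\q)$ this already gives $\cx_A N \leq \cx_A(A/\q)$. For the reverse inequality I would tensor $(\ast)$ with $k$ again and extract from the long exact Tor sequence the lower bound $\beta^A_i(N) \geq \beta^A_i(B^{b}) - \beta^A_i(M^\dagger) = b\,\beta^A_i(A/\q) - \beta^A_i(M^\dagger)$, valid for every $i$. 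Write $s = \cx_A(A/\q)$ and $r = \cx_A M^\dagger$, so $r \leq s-1$. By \ref{cdim}, $\Ext^*_A(M^\dagger,k)$ is a finitely generated graded module over the polynomial ring $S$ of Krull dimension $r$, so its Hilbert function satisfies $\beta^A_i(M^\dagger) = O(i^{r-1}) = O(i^{s-2})$, whereas $\limsup_{i\to\infty}\beta^A_i(A/\q)/i^{s-2} = \infty$ directly from the definition of $\cx_A(A/\q) = s$. Hence $\limsup_{i\to\infty}\beta^A_i(N)/i^{s-2} = \infty$, which forces $\cx_A N \geq s$, and combined with the upper bound we get $\cx_A N = s = \cx_A(A/\q)$.

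The step I expect to be the main obstacle is the first one: isolating the sequence $(\ast)$ with exactly the free $B$-module $B^{\mu_A(M)}$ in the middle and the correct identification of its submodule as $M^\dagger$. This is essentially inside the proof of Lemma \ref{coh}, but one must track these identifications with care. The only other genuinely non-formal input is the polynomial growth estimate $\beta^A_i(X) = O(i^{\cx_A X - 1})$ over the complete intersection $A$; it is precisely this sharp exponent (not $i^{\,\cx_A X - 1+\varepsilon}$) that makes the subtraction in part (2) go through, so it must be invoked in the right form.
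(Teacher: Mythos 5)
Your proof is correct and follows essentially the same route as the paper: set up the short exact sequence $0 \rt M^\dagger \rt G \rt N \rt 0$ coming from the horizontal link over $B = A/\q$ (the paper's equation \ref{gor-cx-eqn}), invoke $\cx_A M^\dagger = \cx_A M$ (the paper cites Avramov--Buchweitz directly; you cite \ref{cx-dual}, which is the same fact), and finish by comparing growth. The paper compresses the finish to a single sentence referring to \ref{cdim}, whereas you spell it out via Betti-number estimates from the long exact sequence of $\Tor(-,k)$; the paper most likely intends the equivalent argument phrased as a Krull-dimension comparison for the graded $S$-modules $\Ext^*_A(-,k)$, but the two bookkeeping schemes carry the same content. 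One small remark: the sharp estimate $\beta_i(X) = O(i^{\cx_A X - 1})$ that you flag as the ``genuinely non-formal input'' is in fact immediate from the $\limsup$ definition of complexity given in the paper (the infimum is attained because the relevant set of integers is upward-closed), so you do not actually need to invoke \ref{cdim} there; this only makes your argument more elementary, not less correct.
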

\begin{proof}
Set $B = A/\q$. Let $M^\dagger = \Ext^g_{A}(M,A) \cong \Hom_B(M, B)$, by Lemma \ref{qgor-mod}. As $M \sim_\q N$ we have an exact sequence
\begin{equation}\label{gor-cx-eqn}
0 \rt M^\dagger \rt G \rt N \rt 0; \quad \text{where} \ G \ \text{is free $B$-module}.
\end{equation}
By \cite[3.3]{ABu} we get $\cx M^\dagger = \cx M$. The result now follows from the exact sequence \ref{gor-cx-eqn} and \ref{cdim}. 
\end{proof}
\section{Proof of Theorem \ref{first}}
In this section we prove Theorem \ref{first}. We need to prove several preliminary results first.

\s Let $M, N$ be finitely generated $A$-modules. By $\beta(M,N)$ we mean the subset of $\Hom_A(M,N)$ which factor through a finitely generated free $A$-module. We first prove the following  
\begin{proposition}
\label{basic}
Let $(A,\m)$ be a Gorenstein local ring. Let $M$ be a maximal \CM \ $A$-module with no free summands. Then $\beta(M,M) \subseteq \rad \End(M)$.
\end{proposition}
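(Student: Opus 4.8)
The plan is to show that any $f \in \beta(M,M)$ is contained in every maximal left ideal of $\End_A(M)$; equivalently, that $\mathrm{id}_M - gf$ is invertible in $\End_A(M)$ for every $g \in \End_A(M)$, since $\beta(M,M)$ is a two-sided ideal (composites of factoring-through-free maps with arbitrary endomorphisms still factor through a free module), it suffices to show $\mathrm{id}_M - f$ is an automorphism of $M$ for every $f \in \beta(M,M)$. So fix $f = \beta\alpha$ with $\alpha\colon M \to F$, $\beta\colon F \to M$, and $F$ free of finite rank.

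First I would reduce invertibility of $\mathrm{id}_M - f$ to surjectivity: an endomorphism of a finitely generated module over a Noetherian ring that is surjective is automatically an isomorphism (a standard Nakayama/Cayley-Hamilton argument). So the real task is to prove $\mathrm{id}_M - f$ is surjective. By Nakayama's lemma, it is enough to show that $\mathrm{id}_M - f$ is surjective after tensoring with $k = A/\m$, i.e. that $\mathrm{id}_{M/\m M} - \bar f$ is surjective (hence bijective) on the finite-dimensional $k$-vector space $M/\m M$. Equivalently, I must rule out $1$ being an eigenvalue of $\bar f$ acting on $M/\m M$; equivalently, I must show $\bar f$ is nilpotent, or at least that $\bar\alpha\colon M/\m M \to F/\m F$ and $\bar\beta\colon F/\m F \to M/\m M$ have the property that $\bar\beta\bar\alpha$ has no eigenvalue $1$.

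The key point, and where the hypothesis that $M$ has no free summands enters, is this: if $\bar f = \bar\beta\bar\alpha$ had $1$ as an eigenvalue, then since $f$ factors through $F$ one can lift an eigenvector to produce a map $M \to F$ and $F \to M$ whose composite, after localizing/completing or after a minimal-generator count, splits off a free summand of $M$. Concretely, I would pass to a minimal presentation and argue that $\mathrm{rank}$ considerations force a free direct summand of $M$ (of rank equal to the dimension of the eigenspace), contradicting the hypothesis. More precisely: write $F = A^n$; $\alpha$ and $\beta$ are given by matrices over $A$, and $\mathrm{id}_M - \beta\alpha$ surjective is equivalent to the map being an iso after $\otimes k$; if not, choosing bases so that $\bar\beta\bar\alpha$ is in a suitable form, a rank-$1$ piece of $F$ maps isomorphically (mod $\m$) to a summand of $M/\m M$ and back, and by Nakayama this rank-$1$ free module is a direct summand of $M$. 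I expect this last step — cleanly extracting the free summand from an eigenvalue-$1$ situation without circular reasoning — to be the main obstacle; the cleanest route is probably: $\beta(M,M) \subseteq \m \cdot \End_A(M)$ when $M$ has no free summands and a minimal presentation is used, because a factorization through $F$ followed by minimality of generators of $M$ forces the matrix entries of $\bar\beta\bar\alpha$ into $\m$. Once $\beta(M,M) \subseteq \m\End_A(M)$ is established, and noting $\m\End_A(M) \subseteq \rad\End_A(M)$ (as $\End_A(M)$ is a module-finite $A$-algebra, its Jacobson radical contains $\m\End_A(M)$), the proposition follows immediately.

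Thus the skeleton is: (1) $\beta(M,M)$ is a two-sided ideal of $\End_A(M)$; (2) $\beta(M,M) \subseteq \m\End_A(M)$, using a minimal presentation of $M$ and the no-free-summand hypothesis to force the entries of any factoring map into $\m$ after composing back to $M$; (3) $\m\End_A(M) \subseteq \rad\End_A(M)$ since $\End_A(M)$ is module-finite over the local ring $A$. Step (2) is the heart of the matter.
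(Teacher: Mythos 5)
Your outer architecture is sound: $\beta(M,M)$ is a two-sided ideal, so membership in $\rad\End(M)$ reduces to showing $\operatorname{id}_M - f$ is an automorphism for each $f\in\beta(M,M)$, and Nakayama plus the usual determinant trick reduces \emph{that} to showing $\bar f=0$ on $M/\m M$ (it really is zero, not merely lacking eigenvalue $1$). But the step you yourself flag as the ``main obstacle'' — extracting a free summand from an eigenvector without circularity — is precisely the step you never close, and it is also more roundabout than necessary. The paper's argument here is a one-liner you are missing: write $f=v\circ u$ with $u=(u_1,\dots,u_n)\colon M\to A^n$. If some component $u_i\colon M\to A$ were surjective it would split, since $A$ is free hence projective, exhibiting $A$ as a direct summand of $M$ — contradiction. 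Therefore $u_i(M)\subseteq\m$ for every $i$, so $u(M)\subseteq\m A^n$, and then $f(M)=v(u(M))\subseteq\m\,v(A^n)\subseteq\m M$. With $f(M)\subseteq\m M$ in hand, $\operatorname{id}_M-f$ is surjective mod $\m$, hence surjective by Nakayama, hence bijective, and you are done. No eigenvalue analysis, no minimal presentation, no mod-$\m$ linear algebra beyond Nakayama.

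Separately, the inclusion you propose as the ``cleanest route,'' namely $\beta(M,M)\subseteq\m\End_A(M)$, is a \emph{stronger} statement than what the above (or the paper) actually produces, and you do not prove it. What the argument yields is $\beta(M,M)\subseteq\Hom_A(M,\m M)$, i.e.\ the set of endomorphisms whose image lies in $\m M$. The two sets are not obviously equal: already for maps into $A$ one has $\Hom_A(M,\m)=M^*$ whenever $M$ has no free summand (every $A$-linear functional lands in $\m$), while $\m M^*\subsetneq M^*$ by Nakayama for $M\neq 0$, so lying in the ``image in $\m$'' submodule is strictly weaker than being an $\m$-multiple. Fortunately it does not matter: $\Hom_A(M,\m M)$ is itself a two-sided ideal of $\End(M)$ contained in $\rad\End(M)$ (again by Nakayama: $1-g$ is surjective for each $g$ in it), which is all the proposition requires. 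So replace your step (2) with the direct computation that $f(M)\subseteq\m M$, and replace your step (3) with $\Hom_A(M,\m M)\subseteq\rad\End(M)$, and the proof is complete.
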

\begin{proof}
Let $f \in \beta(M,M)$. Say $f = v\circ u$ where $u \colon M \rt F$, $v \colon F \rt M$  and $F  = A^n$. Let $u = (u_1,\ldots,u_n)$ where $u_i \colon M \rt A$. As $M$ does not have a free summand we get that $u_i(M) \subseteq \m$ for each $i$. Thus $u(M) \subseteq \m F$. It follows that $f(M) \subseteq \m M$. Thus $f \in \Hom_A(M, \m M)$. However it is well-known and easy to prove that $\Hom_A(M, \m M)  \subseteq \rad \End(M)$.
\end{proof}

\s \label{set-up} It can be easily seen that $\beta(M,M)$ is a two-sided ideal in $\End(M)$.  Set $\Endu(M) = \End(M)/\beta(M,M)$.  Assume $A$ is Gorenstein and  $M$ is maximal \CM \ with no free summands. Let 
\[
0 \rt N \xrightarrow{u} F \xrightarrow{\pi} M \rt 0,
\]
be a minimal presentation. Note that $N$ is also maximal \CM \ with no free-summnads.
We construct a ring homomorphism
\[
\sigma \colon \Endu(M) \rt \Endu(N)
\]
as follows:
Let $\theta \in \End(M)$. Let $\delta \in \End(N)$ be a lift of $\theta$. We first note that if $\delta^\prime $ is another lift of $\theta$ then it can be easily shown that  there exists $\xi \colon F \rt N$ such that $\xi  \circ u = \delta - \delta^\prime$. Thus we have a well defined element $\sigma(\theta) \in \Endu(N)$. Thus we have a map
\[
\widetilde{\sigma} \colon \End(M) \rt \Endu(N)
\]
It is easy to see that $\widetilde{\sigma}$ is a ring homomorphism. We prove
\begin{proposition}\label{mod}
(with hypotheses as above) If $ f \in \beta(M, M)$ then $\widetilde{\sigma}(f) = 0$. 
\end{proposition}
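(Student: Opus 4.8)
The plan is to fix an arbitrary $f \in \beta(M,M)$, exhibit one concrete lift of $f$ to $\End(N)$, and verify that this particular lift is the zero endomorphism of $N$. Since $\widetilde{\sigma}(f)$ is independent of the chosen lift --- this was checked when $\widetilde{\sigma}$ was defined --- it will then follow that $\widetilde{\sigma}(f) = 0$.

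First I would invoke the definition of $\beta(M,M)$ to factor $f = v \circ w$ with $w \colon M \rt G$, $v \colon G \rt M$ and $G$ a finitely generated free $A$-module. Because $G$ is projective and $\pi \colon F \rt M$ is surjective, I can choose a lift $\widetilde{v} \colon G \rt F$ of $v$, i.e.\ with $\pi \circ \widetilde{v} = v$. I then set $\widetilde{f} = \widetilde{v} \circ w \circ \pi \colon F \rt F$. The identity $\pi \circ \widetilde{f} = \pi \circ \widetilde{v} \circ w \circ \pi = v \circ w \circ \pi = f \circ \pi$ shows that $\widetilde{f}$ is a lift of $f$ to $F$; hence $\image(\widetilde{f} \circ u) \subseteq \ker \pi = \image u$, and there is a unique $\delta \in \End(N)$ with $\widetilde{f} \circ u = u \circ \delta$. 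By construction $\widetilde{\sigma}(f)$ is the class of this $\delta$ in $\Endu(N)$.

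The crux is the exactness of the presentation at $F$, which gives $\pi \circ u = 0$. Consequently
\[
\widetilde{f} \circ u = \widetilde{v} \circ w \circ (\pi \circ u) = 0,
\]
so $u \circ \delta = 0$, and since $u$ is injective $\delta = 0$. Therefore $\widetilde{\sigma}(f) = 0$, as claimed.

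I do not foresee a genuine obstacle here: the only slightly delicate inputs are the already-established fact that $\widetilde{\sigma}$ is well defined (so that producing a single vanishing lift suffices) and the existence of the lift $\widetilde{v}$, which is merely projectivity of $G$; the remaining steps form a short diagram chase. Conceptually the statement just records that the syzygy operation respects morphisms factoring through projectives --- functoriality on the stable category --- but the direct argument above is self-contained and is what I would write out.
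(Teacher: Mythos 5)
Your argument is correct and follows essentially the same route as the paper: both construct an explicit lift $\widetilde{f}\colon F\rt F$ of $f$ that factors through the free module $G$ and then observe that the induced endomorphism $\delta$ of $N$ vanishes. The paper phrases this in terms of a chain map on the minimal resolution $\mathbb{F}$ of $M$ with $\widetilde{f}_n=0$ for $n\geq 1$, whereas you work directly with the single short exact sequence and the identity $\pi\circ u=0$; the underlying computation is the same, and your version is, if anything, a little more streamlined.
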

\begin{proof}
Let $\mathbb{F}$ be a minimal resolution of $M$ with $\mathbb{F}_0 = F$. Suppose $f = \psi \circ \phi$ where $\psi \colon G \rt M$, $\phi \colon M \rt G$ and $G = A^m$ for some $m$. We may take $\mathbb{G}$ be a minimal resolution of $G$ with $\mathbb{G}_0 = G$ and $\mathbb{G}_n = 0$ for $n > 0$. We can construct a lift $\widetilde{f} \colon \mathbb{F} \rt \mathbb{F}$  of $f$ by composing a lift of $v$ with that of $u$. It follows that $\widetilde{f}_n = 0$ for $n \geq 1$. An easy computation shows that for this lift $\widetilde{f}$ the corresponding map $\delta \colon N \rt N$ is infact \textit{zero}. Thus $\widetilde{\sigma}(f) = 0$.
\end{proof}
Thus we have a ring homomorphism $\sigma \colon \Endu(M) \rt \Endu(N)$.
Our next result is
\begin{proposition}\label{iso}
(with hypotheses as above) $\sigma$ is an isomorphism.
\end{proposition}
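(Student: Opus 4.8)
The plan is to exhibit an explicit two-sided inverse
\[
\tau \colon \Endu(N) \rt \Endu(M)
\]
of $\sigma$. Throughout I regard $N$ as the submodule $u(N) = \ker \pi$ of $F$, so that $u$ is the inclusion.

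\emph{Construction of $\tau$.} Since $M$ is maximal \CM \ and $A$ is Gorenstein, $\Ext^i_A(M,A) = 0$ for $i \geq 1$, hence $\Ext^1_A(M,F) = 0$; applying $\Hom_A(-,F)$ to $0 \rt N \rt F \xrightarrow{\pi} M \rt 0$ shows that the restriction map $\Hom_A(F,F) \rt \Hom_A(N,F)$ is surjective. Thus every $\theta \in \End_A(N)$ extends to a map $\widetilde\theta \colon F \rt F$ with $\widetilde\theta|_N = \theta$; as $\widetilde\theta(N) \subseteq N = \ker\pi$, such a $\widetilde\theta$ induces an endomorphism $\overline\theta$ of $F/N = M$, characterized by $\pi\widetilde\theta = \overline\theta\pi$. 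Two extensions of $\theta$ differ by some $\eta \colon F \rt F$ vanishing on $N$; such $\eta$ factors as $\eta = \eta'\pi$ with $\eta' \colon M \rt F$, and the endomorphism of $M$ it induces is $\pi\eta' \in \beta(M,M)$. Hence the class of $\overline\theta$ in $\Endu(M)$ is independent of the chosen extension; I set $\tau(\theta)$ to be this class. If moreover $\theta = \psi\phi$ factors through a free module $G$, with $\phi \colon N \rt G$ and $\psi \colon G \rt N$, then (using $\Ext^1_A(M,G) = 0$) $\phi$ extends to $\widetilde\phi \colon F \rt G$, and $\widetilde\theta := u\psi\widetilde\phi$ is an extension of $\theta$ whose image lies in $N = \ker\pi$, so the induced endomorphism of $M$ is $0$. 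Therefore $\tau$ kills $\beta(N,N)$ and descends to $\Endu(N)$. Finally $\mathrm{id}_F$ extends $\mathrm{id}_N$ and, if $\widetilde{\theta_i}$ extends $\theta_i$, then $\widetilde{\theta_1}\widetilde{\theta_2}$ extends $\theta_1\theta_2$ and induces $\overline{\theta_1}\,\overline{\theta_2}$ on $M$; so $\tau$ is a ring homomorphism.

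\emph{$\tau$ and $\sigma$ are mutually inverse.} Given $\theta \in \End_A(M)$, choose a lift $\widetilde\theta \colon F \rt F$ of $\theta$, so $\pi\widetilde\theta = \theta\pi$; by construction $\sigma(\theta)$ is the class of $\widetilde\theta|_N \in \End_A(N)$. But $\widetilde\theta$ is itself an extension of $\widetilde\theta|_N$, and the endomorphism of $M = F/N$ it induces is $\theta$ again; hence $\tau(\sigma(\theta)) = \theta$ in $\Endu(M)$. Conversely, given $\theta \in \End_A(N)$, take an extension $\widetilde\theta$ used to compute $\tau(\theta)$, with induced endomorphism $\overline\theta \in \End_A(M)$; then $\pi\widetilde\theta = \overline\theta\pi$ says $\widetilde\theta$ is a lift of $\overline\theta$, so $\sigma(\overline\theta)$ is represented by $\widetilde\theta|_N = \theta$. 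Since $\tau(\theta)$ is the class of $\overline\theta$, this gives $\sigma(\tau(\theta)) = \theta$ in $\Endu(N)$. Thus $\sigma$ is an isomorphism, with inverse $\tau$.

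\emph{Where the work lies.} Once the two extension facts are available the argument is purely formal; the only points demanding care are the well-definedness assertions, namely that changing the extension $\widetilde\theta$, or the free factorization of an element of $\beta(N,N)$, alters the induced endomorphism of $M$ only within $\beta(M,M)$, or not at all. These rest solely on the identifications $\ker\pi = N$ and on $\Ext^{\geq 1}_A(M,A) = 0$ for the maximal \CM \ module $M$ over the Gorenstein ring $A$. (Conceptually, $\Endu(-)$ is the endomorphism ring in the stable category $\CMS(A)$ and $\sigma$ is the isomorphism induced by the syzygy auto-equivalence $\Om$, but the above is self-contained.)
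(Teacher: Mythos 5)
Your proof is correct, and it takes a genuinely different route from the paper's. Where the paper constructs $\tau$ by dualizing: given $\theta \in \End_A(N)$, one passes to $\theta^* \colon N^* \rt N^*$, lifts along the dualized sequence $0 \rt M^* \xrightarrow{\pi^*} F^* \xrightarrow{u^*} N^* \rt 0$ to get $\delta \colon M^* \rt M^*$, and sets $\tau(\theta) = [\delta^*]$; you instead build $\tau$ directly, extending $\theta$ along the inclusion $N = \ker\pi \hookrightarrow F$ to $\widetilde\theta \colon F \rt F$ (using $\Ext^1_A(M,F)=0$) and descending to $M = F/N$. Both constructions rest on the same vanishing $\Ext^1_A(M, A)=0$ for the MCM module $M$ over the Gorenstein ring $A$, and $(-)^{**} \cong \mathrm{id}$ on MCM modules makes the two definitions agree modulo $\beta(M,M)$ — but in any case, since a two-sided inverse is unique, this does not affect the conclusion. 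What your version buys is self-containedness: the paper's proof defers to ``as in Proposition~\ref{mod}'' for killing $\beta(N,N)$ and to ``tautological'' for the composite identities, while you verify well-definedness (that changing the extension alters the induced endomorphism only by an element of $\beta(M,M)$), the factorization-through-free argument, the ring-homomorphism property, and both composites $\tau\sigma$ and $\sigma\tau$ explicitly. Your closing remark that $\Endu(-)$ is the stable endomorphism ring and $\sigma$ is induced by the syzygy auto-equivalence $\Om$ of $\CMS(A)$ is the conceptual content underlying both proofs.
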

\begin{proof}
We construct a ring homomorphism $\tau \colon \Endu(N) \rt \Endu(M)$ which we show is the inverse of $\sigma$. 

Let $\theta \colon N \rt N$ be $A$-linear. Then $\theta^* \colon  N^* \rt N^*$ is also $A$-linear. We dualize the exact sequence $0 \rt N \rt F \rt M \rt 0$ to get an exact sequence
\[
0 \rt M^* \xrightarrow{\pi^*} F^* \xrightarrow{u^*} N^* \rt 0.
\]
We can lift $\theta^*$ to an $A$-linear map $\delta \colon M^* \rt M^*$. Also if $\delta^\prime$ is another lift then as before it is easy to see $\delta - \delta^\prime \in 
\beta(M^*,M^*)$.  We define $\widetilde{\tau} \colon \End(N) \rt \Endu(M)$ by
$\widetilde{\tau}(\theta) = \delta^*$. It is clear that $\widetilde{\tau}$ is a ring homomorphism. Finally as  in Proposition \ref{mod}, it can be easily proved that if 
$\theta \in \beta(N, N)$ then $\widetilde{\tau}(\theta) = 0$. Thus we have a ring homomorphism $\tau \colon \Endu(M) \rt \Endu(N)$. Finally it is tautalogical that
$$\tau \circ \sigma = 1_{\Endu(M)} \quad \text{and} \quad \sigma \circ \tau = 1_{\Endu(N)}. $$ 
\end{proof}
\s \label{obs} If $\phi \colon R \rt S $ is an isomophism of rings then it is easy to see that $\phi( \rad R) = \rad S$ and thus we have an isomorphism of rings $R/\rad R \cong S/\rad S$. 
As a corollary we obtain
\begin{corollary}
\label{bc}
(with hypotheses as above)
$$ \End(M)/\rad \End(M) \cong \End(N)/\rad \End(N).$$
\end{corollary}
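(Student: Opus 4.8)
The plan is to deduce the corollary from the ring isomorphism $\sigma$ of Proposition \ref{iso} together with the elementary fact that quotienting by a two-sided ideal contained in the Jacobson radical leaves the semisimple quotient unchanged.

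First, recall from \ref{set-up} that $N$ is again a maximal \CM \ $A$-module with no free summands, so Propositions \ref{basic}, \ref{mod} and \ref{iso} apply verbatim to $N$ in place of $M$. By Proposition \ref{iso} there is a ring isomorphism $\sigma \colon \Endu(M) \xrightarrow{\sim} \Endu(N)$, where $\Endu(P) = \End(P)/\beta(P,P)$. Hence, by \ref{obs}, we obtain a ring isomorphism $\Endu(M)/\rad \Endu(M) \cong \Endu(N)/\rad \Endu(N)$.

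The second ingredient is the claim that for a maximal \CM \ $A$-module $P$ with no free summands one has a canonical isomorphism $\End(P)/\rad \End(P) \cong \Endu(P)/\rad \Endu(P)$. Here $\Gamma = \End(P)$ is a module-finite $A$-algebra and $I = \beta(P,P)$ is a two-sided ideal of $\Gamma$ which, by Proposition \ref{basic}, satisfies $I \subseteq \rad \Gamma$. For any ring $\Gamma$ and any two-sided ideal $I \subseteq \rad \Gamma$ one has $\rad(\Gamma/I) = (\rad \Gamma)/I$: the surjection $\Gamma \to \Gamma/I$ carries $\rad \Gamma$ into $\rad(\Gamma/I)$, giving one inclusion, while if $x + I \in \rad(\Gamma/I)$ then $1 - rx$ is left-invertible modulo $I$ for every $r \in \Gamma$, hence left-invertible in $\Gamma$ since $I \subseteq \rad \Gamma$, which forces $x \in \rad \Gamma$. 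Consequently $(\Gamma/I)/\rad(\Gamma/I) \cong \Gamma/\rad \Gamma$, which is the claim.

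Chaining the three isomorphisms, for $P = M$, via $\sigma$, and for $P = N$, yields
\[
\End(M)/\rad \End(M) \;\cong\; \Endu(M)/\rad \Endu(M) \;\cong\; \Endu(N)/\rad \Endu(N) \;\cong\; \End(N)/\rad \End(N),
\]
as desired. I do not anticipate a genuine obstacle here: the substance of the corollary is already concentrated in Proposition \ref{iso}, and the one remaining step — the identification of $\rad(\Gamma/I)$ above — is a routine fact about Jacobson radicals of (not necessarily commutative) rings.
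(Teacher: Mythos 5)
Your proof is correct and follows essentially the same route as the paper's: invoke Proposition \ref{iso} to get the isomorphism $\sigma \colon \Endu(M) \to \Endu(N)$, use Proposition \ref{basic} to see $\beta(\cdot,\cdot) \subseteq \rad\End(\cdot)$, and then apply the standard fact $\rad(\Gamma/I) = (\rad\Gamma)/I$ for $I \subseteq \rad\Gamma$ (which the paper states without proof and you usefully spell out). The only cosmetic difference is that you pass through $\Endu/\rad\Endu$ on both sides before chaining, whereas the paper directly identifies $\rad\Endu(M) = \rad\End(M)/\beta(M,M)$ and then cites \ref{obs}; these are the same argument.
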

\begin{proof}
By Proposition \ref{iso} we have an isomorphism $\sigma \colon \Endu(M) \rt \Endu(N)$.
By \ref{basic} we have that $\beta(M,M) \subseteq \rad \End(M)$.  It follows that  
$$\rad \Endu(M) = \rad \End(M)/\beta(M,M).$$
Similarly $\rad \Endu(N) = \rad \End(N)/\beta(N,N)$. The result follows from \ref{obs}.
\end{proof}

We now give 
\begin{proof}[Proof of Theorem \ref{first}]
It suffices to consider to prove that if 
\[
M_0 \sim_{\q} M_{1}
\]
then $\End_A(M_0)/\rad \End_A(M_0) \cong (\End_A(M_0)/\rad \End_A(M_0))^{op}$. By assumption $M_0, M_1 \in \CMg(A)$. It follows that $\q$ is a codimension $g$ quasi-Gorenstein ideal \cite[Lemma 14, p.\ 616]{MS}. Set $B = A/\q$.  Then $B$ is a Gorenstein ring. Notice $M_0, M_1$ are maximal \CM \ $B$-modules. Furthermore they are stable $B$-modules, see \cite[Proposition 3, p.\ 593]{MS}.
Notice $\Hom_A(M_i, M_i) = \Hom_B(M_i, M_i)$ for $i = 0, 1$. 

As $M_0$ is horizontally linked to $M_1$ we get that $M_0^* \cong \Omega(M_1)$. By
\ref{bc} we get that
\[
\End(M_1)/\rad \End(M_1) \cong \End(M_0^*)/\rad \End(M_0^*).
\]
Furthermore it is easy to see that $\End_B(M_0) \cong \End_B(M_0^*)^{op}$ and this is preserved when we go mod radicals. Thus
\[
\End(M_1)/\rad \End(M_1) \cong \left(\End(M_0)/\rad \End(M_0)\right)^{op}.
\]
\end{proof}


\begin{thebibliography}{10}

\bibitem{AB}
M.~Auslander and R-O.~Buchweitz,
\emph{The homological theory of maximal {C}ohen-{M}acaulay approximations},
Colloque en l'honneur de Pierre Samuel (Orsay, 1987).
Mém. Soc. Math. France (N.S.) No. 38 (1989), 5–-37. 

\bibitem{LLAV}
L.~L. Avramov, \emph{Modules of finite virtual projective dimension},
  Invent.~math \textbf{96} (1989), 71--101.

\bibitem{ABu}  
L.~L. Avramov and R-O.~Buchweitz, 
\emph{Support varieties and cohomology over complete intersections}, 
Invent. Math. 142 (2000), no. 2, 285–-318.   

\bibitem{BH}
 W.~Bruns and J.~Herzog, 
 \emph{Cohen-{M}acaulay Rings, revised edition},
  Cambridge Stud. Adv. Math., vol. 39, Cambridge University Press, Cambridge, (1998). 

\bibitem{DGPS}
W.~Decker, G.~-M. Greuel, G.~Pfister and H.~Sch{\"o}nemann,
\newblock {\sc Singular} {4-0-2} --- {A} computer algebra system for polynomial computations.
\newblock {http://www.singular.uni-kl.de} (2015).

\bibitem{Eis}
D.~Eisenbud, \emph{Homological algebra on a complete intersection, with an
  application to group representations}, Trans. Amer. Math. Soc. \textbf{260}
  (1980), 35--64.
  
\bibitem{Gull}
T.~H. Gulliksen, \emph{A change of ring theorem with applications to
  {P}oincar\'e series and intersection multiplicity}, Math. Scand. \textbf{34}
  (1974), 167--183.  

\bibitem{H}
C.~Huneke, 
\emph{Linkage and the Koszul homology of ideals}, 
Amer. J. Math. 104 (1982), no. 5, 1043–-1062. 

\bibitem{H2}
C.~Huneke, 
\emph{Numerical invariants of liaison classes},
Invent. Math. 75 (1984), no. 2, 301–-325. 

\bibitem{MS}
A.~Martsinkovsky and J.~R.~Strooker,
\emph{Linkage of modules},
J. Algebra 271 (2004), no.~2, 587–-626. 

\bibitem{Mat}
H.~Matsumura, 
\emph{Commutative ring theory},
Translated from the Japanese by M. Reid. Second edition. Cambridge Studies in Advanced Mathematics, 8. Cambridge University Press, Cambridge, 1989. 

\bibitem{PS}
C.~Peskine and L.~Szpiro, 
\emph{Liaison des variétés algébriques. I}, 
Invent. Math. 26 (1974) 271–-302.


















\end{thebibliography}
\end{document}